\tikzset{
    dot/.style 2 args={fill, circle, inner sep=1pt, label={#1:\scriptsize #2}}
}
\newcommand\numberthis{\addtocounter{equation}{1}\tag{\theequation}}
\let\cref@old@eq@setnumber\eq@setnumber
\def\eq@setnumber{%
\cref@old@eq@setnumber%
\cref@constructprefix{equation}{\cref@result}%
\protected@xdef\cref@currentlabel{%
[equation][\arabic{equation}][\cref@result]\p@equation\theequation}}
\title{Hamiltonian surgery: Cheeger-type inequalities for nonpositive (stoquastic), real, and Hermitian matrices}
\author[1]{Michael Jarret \thanks{mjarret@pitp.ca}}
\affil[1]{\footnotesize\PITP}
\date{\today}
\begin{document}

\maketitle

\begin{abstract}
    Cheeger inequalities bound the spectral gap $\gamma$ of a space by isoperimetric properties of that space and vice versa. In this paper, I derive Cheeger-type inequalities for nonpositive matrices (aka stoquastic Hamiltonians), real matrices, and Hermitian matrices. For matrices written $H = L+W$, where $L$ is either a combinatorial or normalized graph Laplacian, I show that,
    \begin{enumerate}
        \item when $W$ is diagonal and $L$ has maximum degree $d_{\max}$, $2h \geq \gamma \geq \sqrt{h^2 + d_{\max}^2}-d_\max$;
        \item when $W$ is real, we can often route negative-weighted edges along positive-weighted edges such that the Cheeger constant of the resulting graph obeys an inequality similar to that above; and
        \item when $W$ is Hermitian, the weighted Cheeger constant obeys $2h \geq \gamma$ 
    \end{enumerate}
    where $h$ is the weighted Cheeger constant of $H$. This constant reduces bounds on $\gamma$ to information contained in the underlying graph and the Hamiltonian's ground-state. 
    
    If efficiently computable, the constant opens up a very clear path towards adaptive quantum adiabatic algorithms, those that adjust the adiabatic path based on spectral structure. I sketch a bashful adiabatic algorithm that aborts the adiabatic process early, uses the resulting state to approximate the weighted Cheeger constant, and restarts the process using the updated information. Should this approach work, it would provide more rigorous foundations for adiabatic quantum computing without \textit{a priori} knowledge of the spectral gap. 
\end{abstract}

\section{Introduction}

\subsection{Motivation}
An $n \times n$ Hermitian matrix $H$ has eigenvalues $\lambda_0 \leq \lambda_1 \leq \dots \leq \lambda_{n-1}$. We call the difference in the two lowest eigenvalues of $H$, $\gamma = \lambda_1 - \lambda_0$, its spectral gap. Bounding the spectral gap is a problem that could be motivated any number of ways. In quantum theory, the spectral gap determines the runtime of adiabatic algorithms and processes \cite{Jansen2006,Albash2018,Crosson2016} and relates to quantum phase transitions \cite{sachdev2007quantum}. The spectral gap is also intimately related to the rate at which heat diffuses on a manifold \cite{yau2009estimate,andrews2011proof} and the rate at which substochastic processes approach their quasistationary distributions \cite{collet2012quasi,collet2013markov}. At the computational level, it determines the runtime of various well-known randomized algorithms \cite{sinclair2012algorithms} as well as Fleming-Viot type algorithms for approximating marginals \cite{Jarret2016,jarret2017substochastic,cloez2016quantitative,cloez2016fleming}. Each of these is an independently interesting topic, which would motivate its own study of the spectral gap. 

Here, I abstract away the context and seek to understand the spectral structure of $H$ by decomposing it as $H=L+W$, the sum of a graph Laplacian $L$ and some other Hermitian matrix $W$. All Hermitian matrices can be decomposed this way and, as we will see, the decomposition proves fruitful. If $W$ is diagonal, $H$ is frequently called a ``stoquastic'' Hamiltonian or ``stoquastic'' matrix. A diagonal $W$ also implies that $H$ is an infinitesimal generator of a substochastic process and the resulting matrix $I-\epsilon H$ is a substochastic matrix. When $W$ is not diagonal, but instead real, the matrix $H$ may have a \textit{sign problem}, or all off-diagonal terms may not have the same sign. The ``problem'' is that such Hamiltonians can be difficult to study with Monte Carlo methods \cite{troyer2005computational}. Finally, when $W$ is a general Hermitian matrix, then $H$ has no special name; Hamiltonian is special enough. 

In this paper, I look to formalize the relationship between $\gamma$ and some geometrical properties of the ground-state $\phi_0$ of $H$, or its lowest eigenvector. I always assume that $H$ is represented in such a way that $H:\mathbb{C}^{\abs{V}} \longrightarrow \mathbb{C}^{\abs{V}}$ for some graph $G=(V,E)$. In our representation, $L$ is the graph Laplacian of $G$. Correspondingly, we consider functions $\phi:V \longrightarrow \mathbb{C}$. I often assume that $H$ has been rotated by a diagonal unitary transformation such that $\phi_0 \geq 0$ and will define a weighted Cheeger constant $h$ \cite{Chung2000}, capturing the relevant geometric properties of $\phi_0$. It remains unclear how difficult approximating $h$ is, however in the event that $W=0$, it reduces to the Cheeger constant of $G$ and can be efficiently approximated. If and when one can approximate $h$ remains a very important open question beyond the scope of this paper, though I discuss some related ideas in \cref{sec:discussion}.

The conceptual lesson of this paper is quite concrete. For any Hermitian matrix $H$, if $H$ has a large spectral gap, then $\phi_0$ \textit{has no bottlenecks}. That is to say, that $\phi_0$ is a somewhat smooth distribution over $G$. Prior results, discussed below, suggest that we should already believe this, but leave open the possibility that there exist cases that betray our intuition. Provided that $H$ is not diagonal, I show that our intuition is always correct. (In the case that $H$ is diagonal, our intuition is trivially correct.) I do not, however, show the converse. That is, I leave open the question of whether a small spectral gap implies a bottlenecked $\phi_0$. I show that this is indeed implied in the stoquastic and some real cases, but when this is implied by the general Hermitian case is left open. Adapting these techniques to more general cases appears possible and I will discuss some potential approaches as we progress through the proof. Furthermore, in \cref{sec:discussion}, we will see that understanding the precise relationship might have far-reaching implications for quantum adiabatic algorithms.

\subsection{Previous Work}\label{sec:previous}
In this paper, we study isoperimetric inequalities of discrete systems. Such inequalities enjoy a rich history. Within the context of randomized algorithms, the Cheeger constant often provides a means of determining the mixing time of a Markov chain and, thus, the efficiency of certain approximation algorithms \cite{sinclair2012algorithms}. Standard Cheeger inequalities relate the spectral gap $\gamma$ of the Laplacian $L$ corresponding to a graph $G$ and the Cheeger constant $h$ of that graph. They usually appear in a form similar to
\begin{equation}\label{eqn:standard}
    2h \geq \gamma \geq \frac{h^2}{2}
\end{equation}
and provide a very useful, intuitive significance to the spectral gap. Although a useful quantity, we know that computing the Cheeger constant exactly for an arbitrary graph is NP-hard \cite{GAREY1976237,leighton1988approximate,kaibel2004expansion}. Despite this hardness, the Cheeger constant can indeed be efficiently approximated \cite{sinclair2012algorithms,kannan2004clusterings}. 

The spectral gap, and hence Cheeger constant, is also of primary interest in spectral graph theory, where it is often explored in connection with graph Laplacians \cite{Chung}. In \cite{Chung2000}, the authors adapted Cheeger inequalities to apply to the gap in the \textit{Dirichlet eigenvalues} of a graph. The distinguishing characteristic of the Dirichlet eigenvalues is that they arise by imposing a Dirichlet boundary constraint. This constraint requires that, for some subset of vertices $\delta V \subseteq V$, all eigenfunctions must satisfy $f\vert_{\delta V} = 0$. These eigenvalues are also studied quite a bit and numerous bounds appear in the literature. Unfortunately for us, these studies typically focus on the easier problem of bounding eigenvalues, not their differences. Additionally, the few gap inequalities that exist, like those in \cite{Chung2000}, are not easily applied to most situations we are presently interested in. Thus, we require a new inequality.

To this end, various authors (including me) have pursued Cheeger-type inequalities in the stoquastic case \cite{al2010energy,Jarret2014a} and more general Hermitian matrices \cite{crosson2017quantum}. In either case, this problem is actually equivalent to that of determining the differences in the Dirichlet eigenvalues of an appropriate host graph. These inequalities all assume an unfortunate form that looks something like
\begin{equation}\label{eqn:old}
    2 \norm{H} h \geq \gamma \geq \frac{h^2}{2\norm{H}}
\end{equation}
where $h$ is an appropriately defined Cheeger constant. We can easily see the weakness of this expression: unlike in the case of graph Laplacians, it is entirely possible that $h^2 \sim \norm{H} \sim e^{n}$. Thus, the lower bound from \cref{eqn:old} scales like a constant, whereas we would expect from \cref{eqn:standard} that $\gamma \gtrsim e^{n}$. A similar argument illuminates the weakness of the upper bound. Suppose the very common situation that $\norm{H} \sim e^{n}$ and $h\sim e^{-n}$. Then, the upper bound on $\gamma$ scales as a constant whereas we expect that $\gamma \lesssim e^{-n}$. This latter issue leaves open the possibility that one might have a large spectral gap in the presence of a bottleneck. In this work, I will correct these defects.

\subsection{Results}
Consider a graph $G=(V,E)$ with edge weights assigned by $w:V\times V \longrightarrow \mathbb{R}$. Then, for the corresponding graph Laplacian $L$ and any real diagonal matrix $W$, $H=L+W$ admits a weighted Cheeger constant $h$, defined in \cite{Chung2000} and again in \cref{sec:cheeger}. In particular, I prove that for any stoquastic matrix with spectral gap $\gamma$
\begin{equation}\label{eqn:result}
    {2 h \geq \gamma \geq \sqrt{h^2 + Q^2} - Q}
\end{equation}
where, if $L$ is a combinatorial Laplacian, $Q$ is the maximum degree of a vertex of $G$. If $L$ is a normalized Laplacian, $Q=1$. 

For any real matrix, we can identify positive off-diagonal terms with negative edge weights ($E^- = \{\{u,v\} \in E \vert \allowbreak w(u,v) \allowbreak < 0 \allowbreak \}$) and show that
\[
    {2 h \geq \gamma \geq \sqrt{k^2 + Q^2} - Q}.
\]
if $\phi_0$ is uniform up to phase and 
\[
    {2h \geq \gamma \geq (Q+\rho)^2 - \sqrt{(Q+\rho)^2 -k^2}}
\]
where $\rho = \lambda_{\abs{V}-1}-\lambda_0$ otherwise. Above, $h$ is the weighted Cheeger constant corresponding to the graph $G^+ = (V,E\setminus E^-)$ under the original weight function and $k$ the weighted Cheeger constant of $G^+$ with a redistributed weight function $w^+$ to be defined in \cref{sec:neg_edges}. In \cref{sec:applications}, we will see that these equations can often be relaxed to 
\[
    {2 h \geq \gamma \geq \epsilon
    \left(\sqrt{h^2+Q^2}-Q\right)}
\]
for a constant $\epsilon$, which may be easier to apply and retains appropriate scaling behavior. In other words, at least asymptotically, I reduce the problem of bounding the gap of a signed graph $G$ to that of determining the appropriate Cheeger constant of $G^+$.

Finally, I provide the upper bound
\begin{equation}\label{eqn:result2}
    2h \geq \gamma
\end{equation}
for any Hermitian matrix.

Not only does this expression correct the problems mentioned in \cref{sec:previous}, but the improvement over these statements can be quite drastic and firmly establishes some conceptual points. Note that in cases where $h$ is large compared to the maximum degree $Q$, which often happens when $\norm{W}$ is sufficiently large, the lower bound in \cref{eqn:old} becomes weak whereas \cref{eqn:result} remains tight. Furthermore, the form of the expression guarantees that the inequality scales appropriately for all relative sizes of $L$ and $W$ and, hence, all Hermitian matrices. Although establishing the lower bound in \cref{eqn:old} is unlikely in general, expanding around $h\approx 0$ does yield a similar expression. Furthermore, when $h$ is large relative to $Q$, \cref{eqn:result} guarantees that $\gamma \sim h$.

The efficiency with which one can classically approximate $h$ remains unclear, but the quantity only depends upon information about the ground-state distribution of $H$ and the corresponding graph $G$. This opens up the possibility that an adiabatic algorithm may be able to efficiently approximate $h$, even if a classical method remains elusive. This ability would be a great advantage to the field of adiabatic optimization, as it could be used to determine the appropriate time dependence of an adiabatic evolution without \textit{a priori} knowledge of the spectral gap. Such an evolution can be necessary to produce quantum speedups, like those achieved in adiabatic Grover search \cite{roland2002quantum}. This idea will be discussed in detail in \cref{sec:discussion}, but conclusive results, should they exist, are left for future work.

\section{Preliminaries}
\subsection{The Rayleigh Quotient}\label{sec:rayleigh}
For an $n\times n$ Hermitian operator $H$ acting on the space $\mathcal{S} = \{f: \intrange{1}{n} \longrightarrow \mathbb{C}^n\}$, one defines the Rayleigh quotient corresponding to a function $f \in \mathcal{S}$ as
\begin{equation}\label{eqn:Rayleigh}
    R(H,f) = \frac{\langle f, H f\rangle}{\langle f, f\rangle}.
\end{equation}
Thus, the eigenvalues $\lambda_0(H) \leq \lambda_1(H) \leq \dots \leq \lambda_{n-1}(H)$ of $H$ can be written as
\begin{equation}\label{eqn:eigenvalues}
    \lambda_i(H) = \inf_{f \perp T_{i-1}}\frac{\langle f, H f\rangle}{\langle f, f\rangle}
\end{equation}
where $T_{i}$ is the space spanned by the functions $f_j$ achieving $\lambda_j(H)$ for each $0 \leq j \leq i$. We call $f$ achieving $\lambda_0(H)$ the \textit{ground-state}. Of particular interest in this paper is the spectral gap $\gamma(H)$ of $H$, or the difference in its two lowest eigenvalues, $\gamma(H) = \lambda_1(H) - \lambda_0(H)$. Usually, we will just write $\gamma = \gamma(H)$ and $\lambda_i = \lambda_i(H)$ and reserve the argument for when it is necessary to distinguish the eigenvalues of two matrices.

Our first goal is to rewrite $H$ in a form useful for the current work. Presently, we only seek lower bounds for real matrices, so we can prove a quick comparison theorem between $\gamma(H)$ and $\gamma(\Re(H))$ where $\Re(H)$ is the real part of $H$. One can immediately obtain a useful upper bound on the spectral gap of $H$ by considering the function $\phi_0$ obtaining $\lambda_0(H)$ in \cref{eqn:eigenvalues}. 
\begin{prop}\label{prop:Hermitian}
For a Hermitian matrix $H$ with spectral gap $\gamma(H)$, ground-state $\phi_0$, and $U= \mathrm{diag}(\phi_0/\abs{\phi_0})$ where the ratio and absolute value are taken pointwise,
\[
    \gamma(H) \leq \gamma(\Re(U^\dagger H U)).
\]
\end{prop}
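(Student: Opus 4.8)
The plan is to exploit that $U$ is a diagonal unitary, so $\widetilde H := U^\dagger H U$ is Hermitian with exactly the same spectrum as $H$; in particular $\gamma(H)=\gamma(\widetilde H)$ and $\Re(U^\dagger H U)=\Re\widetilde H$, so it suffices to prove $\gamma(\widetilde H)\le\gamma(\Re\widetilde H)$. The whole point of conjugating by $U=\mathrm{diag}(\phi_0/\abs{\phi_0})$ is that the ground-state is rotated to $\psi_0:=U^\dagger\phi_0=\abs{\phi_0}$, which is real (and nonnegative); if $\phi_0$ has a vanishing entry I just set the corresponding entry of $U$ to $1$, which affects nothing below. Write $\widetilde H=A+iB$ with $A=\Re\widetilde H$ real symmetric and $B=\Im\widetilde H$ real antisymmetric. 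The one elementary fact driving everything is that for a real vector $f$ we have $\langle f,Bf\rangle=f^\top Bf=0$, hence $\langle f,\widetilde Hf\rangle=\langle f,Af\rangle$: $\widetilde H$ and $A$ have identical Rayleigh quotients on real vectors.

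First I would pin down the bottom of the spectrum. From $\widetilde H\psi_0=\lambda_0\psi_0$ with $\psi_0,\lambda_0$ real, taking real and imaginary parts gives $A\psi_0=\lambda_0\psi_0$ (and, as a bonus, $B\psi_0=0$), so $R(A,\psi_0)=\lambda_0$ and therefore $\lambda_0(A)\le\lambda_0$. Conversely, since $A$ is real symmetric its smallest eigenvalue is attained on a real vector, and on real vectors $R(A,\cdot)=R(\widetilde H,\cdot)\ge\lambda_0(\widetilde H)=\lambda_0$; hence $\lambda_0(A)=\lambda_0=\lambda_0(\widetilde H)$, and $\psi_0$ is a ground-state of $A$ as well.

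Next I would bound $\lambda_1$. Complete $\psi_0$ to an orthonormal eigenbasis of the real symmetric matrix $A$; among the basis vectors orthogonal to $\psi_0$ there is one, call it $v$, with $Av=\lambda_1(A)\,v$, and this is so whether or not $\lambda_0(A)$ is degenerate. Since $v$ is real, $R(\widetilde H,v)=R(A,v)=\lambda_1(A)$. But $v\perp\psi_0$ and $\psi_0$ is a $\lambda_0$-eigenvector of $\widetilde H$, so the min--max characterization $\lambda_1(\widetilde H)=\min\{R(\widetilde H,f):0\ne f\perp\psi_0\}$ holds (it is valid even when $\widetilde H$ has a degenerate ground-state, because $\widetilde H$ preserves $\psi_0^{\perp}$ whenever $\psi_0$ is a $\lambda_0$-eigenvector), and it yields $\lambda_1(\widetilde H)\le R(\widetilde H,v)=\lambda_1(A)$. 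Combining the two steps, $\gamma(H)=\gamma(\widetilde H)=\lambda_1(\widetilde H)-\lambda_0(\widetilde H)\le\lambda_1(A)-\lambda_0(A)=\gamma(\Re\widetilde H)$.

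The only place that needs care — and the main trap in a careless argument — is keeping the directions straight: one must first establish that the \emph{ground energies} of $\widetilde H$ and $A$ coincide, and then use $v$ as a trial function for $\lambda_1(\widetilde H)$ (not $\widetilde H$'s excited vector as a trial function for $A$). Everything else is the two-line observation that the antisymmetric part $B$ drops out of every real Rayleigh quotient, together with the min--max principle; notably, nonnegativity of $\psi_0$ is never used, only its reality, which is precisely what the rotation by $U$ supplies.
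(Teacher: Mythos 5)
Your proof is correct and essentially mirrors the paper's: both rely on the facts that the skew-symmetric part of $\widetilde H = U^\dagger H U$ drops out of Rayleigh quotients on real vectors, that $\psi_0 = \abs{\phi_0}$ is a joint real eigenvector of $\widetilde H$ and $A = \Re\widetilde H$ at the shared bottom eigenvalue, and that restricting the $\lambda_1$ trial space to real vectors orthogonal to $\psi_0$ yields $\lambda_1(A)$ as an upper bound for $\lambda_1(\widetilde H)$. You are somewhat more careful than the paper about justifying $\lambda_0(\widetilde H) = \lambda_0(A)$ (the paper's transition to an infimum over $f \geq 0$ is left unexplained) and about the degenerate ground-state edge case, but the underlying strategy is the same.
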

\begin{proof}
This proof is very straightforward. First, suppose $H$ has ground-state $\phi_0$. Then, let $U = \mathrm{diag}(\phi_0/\abs{\phi_0})$ where the ratio and absolute value are taken pointwise. Obviously, $U$ is unitary and $U^\dagger \phi_0 \geq 0$. Now, write $\Im(U^\dagger H U) = i S$, where $S \in \mathbb{R}^{n \times n}$ is skew-symmetric. Thus, $\lambda_0$ satisfies
\begin{align*}
    \lambda_0 &=\inf_{f \in \mathbb{C}^n}\frac{\langle f,Hf\rangle}{\langle f,f\rangle}\\
    &=\inf_{U^\dagger f \in \mathbb{C}^n}\frac{\langle U f,H U f\rangle}{\langle U f,U f\rangle}\\
    &=\inf_{f \geq 0}\frac{\langle f,U^\dagger H U f\rangle}{\langle f,f\rangle}\\
    &=\inf_{f \geq 0}\frac{\langle f,\Re(U^\dagger H U)f\rangle+\langle f,iS f\rangle}{\langle f,f\rangle}\\
    &=\inf_{f \geq 0}\frac{\langle f,\Re(U^\dagger H U)f\rangle}{\langle f,f\rangle}
\end{align*}
where the second equality follows from our choice of $U$ and the final equality from the skew-symmetry of $S$. Now, the Rayleigh quotient for $\lambda_1$ becomes 
\begin{align*}
    \lambda_1 &= \inf_{\substack{f \perp \phi_0 \\ f \in \mathbb{C}^n}}\frac{\langle f , H f\rangle}{\langle f , f\rangle}\\
    &= \inf_{\substack{U f \perp \phi_0 \\ f \in \mathbb{C}^n}}\frac{\langle U f , H U f\rangle}{\langle U f , U f\rangle}\\
    &= \inf_{\substack{f \perp U^\dagger \phi_0 \\ f \in \mathbb{C}^n}}\frac{\langle f , U^\dagger H U f\rangle}{\langle f , f\rangle}\\
    &= \inf_{\substack{f \perp U^\dagger \phi_0 \\ f \in \mathbb{C}^n}}\frac{\langle f , \Re(U^\dagger H U) f\rangle + \langle f, iS f \rangle}{\langle f , f\rangle}\\
    &\leq \inf_{\substack{f \perp U^\dagger \phi_0 \\ f \in \mathbb{R}^n}}\frac{\langle f , \Re(U^\dagger H U) f\rangle + \langle f, iS f \rangle}{\langle f , f\rangle}\\
    &=\inf_{\substack{f \perp U^\dagger\phi_0 \\ f \in \mathbb{R}^n}}\frac{\langle f , \Re(U^\dagger H U) f\rangle}{\langle f , f\rangle}.
\end{align*}
Above, the inequality follows from introducing the additional constraint on the infimum. Thus, the gap of $\gamma(H) \leq \gamma(\Re(U^\dagger H U))$.
\end{proof}

\subsection{Stoquastic Hamiltonians}
\Cref{prop:Hermitian} guarantees us that, at least in the case of upper bounds, we hereafter need only consider $\Re(U^\dagger H U)$. Hence, we no longer address the issue of upper bounding the gap of a Hermitian matrix, since the bound is implied by any bounds on real matrices. Although determining an appropriate $U$ to actually perform the rotation in \cref{prop:Hermitian} might be a hard problem in general,\footnote{I would conjecture that, since the problem of determining a graph's frustration index is NP-hard \cite{sher,barahona} actually determining this unitary should be NP-hard. That one can efficiently detect whether a signed graph is balanced implies, with only slight modification, that one can efficiently detect whether a Hamiltonian is stoquastic \cite{HARARY1980131} in this simple case. Finding a unitary which makes a general Hamiltonian stoquastic is NP-complete \cite{Marvian2018}.} there exist certain cases where this becomes relatively easy. One convenient way to describe these situations is through the \textit{frustration index} of the matrix $\Theta = H/\abs{H}$ where, again, the ratio and absolute value are taken pointwise. 

If we view $\Theta$ as an adjacency matrix, as will be made precise in the following section, we can consider a cycle cover of $\Theta$ given by the successor function $\sigma:\intrange{1}{n} \longrightarrow \intrange{1}{n}$. Here, $\sigma$ is just a permutation of $\intrange{1}{n}$. Then, the sequence $i \rightarrow \sigma(i) \rightarrow \sigma \cdot \sigma(i)\rightarrow \dots \rightarrow i$ is a cycle through $\Theta$, which we refer to as $c_\sigma(i)$.  We call the set of all successor functions $C = \{c_\sigma\}$. 

For any $1\leq i\leq n$, we define the signature of the cycle $c_\sigma(i)$ as
\[
    {\mathrm{sig}(c_\sigma(i)) = \prod_{k \in c_\sigma(i)} \left[-\Theta_{k,\sigma(k)}\right] = (-1)^{\abs{c_\sigma(i)}} \prod_{k \in c_\sigma(i)} \Theta_{k,\sigma(k)}.}
\]
In analogy to the standard definition, we somewhat carelessly define the \textit{frustration index} of $\Theta$ as the minimum number of elements of $\Theta$ that need to be removed such that $\mathrm{sig}(c_\sigma(i)) \in \{0,1\}$ for all $c_\sigma \in C$ and $i \in \intrange{1}{n}$ \cite{Atay2014,Martin2017,Lange2015}. This particular definition is clearly far from ideal, since complex phases imply that this is not a strict question of combinatorics, and we should prefer a functional definition similar to that of \cite{Lange2015} in the future. Despite its failings, we can use this definition to define \textit{stoquastic} matrices.
\begin{define}
    We call a matrix \textit{stoquastic} if it has frustration index $0$.
\end{define}
This definition of stoquastic diverges from much of the literature on the subject. (See, e.g. \cite{bravyi2008complexity}.) Nonetheless, it is a bit more descriptive and (potentially) avoids redefining well-known mathematical concepts.\footnote{Stoquastic Hamiltonians are typically one in a long list of names for matrices with nonpositive (or nonnegative) off-diagonal terms. Nonetheless, I would be incredibly surprised if the extended class here has escaped a pre-existing label.} We introduce this definition for two reasons: (1) because frustration index has been used to obtain better isoperimetric inequalities \cite{Martin2017,Lange2015}, setting the stage for future work, and (2) because it extends our results to a broader class of matrices. Importantly, this property can be efficiently checked (at least in the dimension of the matrix), so that one can determine whether or not stoquastic spectral bounds apply even if one is unsure that a matrix is stoquastic. Thus, this definition makes the methods presented below easier to apply in many cases.  

The unitary $U$ that transforms $H$ such that all off-diagonal terms of $U^\dagger H U$ are nonpositive is immediate. First for any cycle we can decompose $\sigma$ into paths $\sigma_1$ and $\sigma_2$.
\begin{align*}
    1 &=\prod_{k\in c_\sigma(i)} \left[-\Theta_{k ,\sigma(k)}\right]\\ &= \left(\prod_{k\in c_{\sigma_1}(i)} \left[-\Theta_{k ,\sigma(k)}\right]\right)\left( \prod_{k \in c_{\sigma_2}(i)}\left[-\Theta_{k,\sigma(k)}\right]\right) \\
    &=\left(\prod_{k\in c_{\sigma_1}(i)} \left[-\Theta_{k ,\sigma_1(k)}\right]\right)\left( \prod_{k\in c_{ \sigma_2}^{-1}(i)}\left[-\Theta^\dagger_{k,\sigma_2(k)}\right]\right)
\end{align*}
where the final line follows because, since $\Theta$ is Hermitian, every point in a cycle forms its own cycle. In other words, beginning at $i$, the product $\prod_{k \in c_\sigma(i)}^{\sigma^j(i)}(-\Theta_{k,\sigma(k)})$ is entirely independent of the particular path chosen. This immediately implies the well-known fact that the frustration index of a real, nonnegative $\Theta$ is $0$ if and only if $\Theta$ describes a bipartite graph. The path-independence above also implies that one can explicitly construct the appropriate unitary $U$ by choosing a vertex, say $i$ and then, for all $j$ in some cycle with $i$, $U_{jj} = \prod_{k=i}^{\sigma^{-1}(j)}(-\Theta_{ij})U_{ii}$. Since every pair of vertices forms a simple cycle, this reduces to the constraint that, provided $\Theta_{ij}\neq 0$, $U_{jj}=-\Theta_{ij}U_{ii}$. Thus, we know that this definition of $U$ is consistent and unique up to a global phase. Furthermore, it clearly performs the appropriate transformation. Thus, if we satisfy stoquasticity, we know \textit{a priori} that $U^\dagger H U$ has all nonpositive off-diagonal elements. More importantly, because $U$ is diagonal, we do not need to do the unitary transformation; we can simply replace each off diagonal term $w_{uv}$ with $-\abs{w_{uv}}$ and obtain the resulting matrix.

Despite the utility of this condition in producing bounds for a larger class of matrices, in what follows we assume the problem has been reduced such that $H \mapsto U^\dagger H U$, guaranteeing that all off-diagonal terms are nonpositive and the ground-state $\phi_0 \geq 0$. This allows for a simpler presentation.

\subsection{Graph Laplacians}
We wish to characterize $\Re(U^\dagger H U)$ in terms of graph Laplacians. Although the standard combinatorial and normalized graph Laplacian are defined such that all diagonal elements are nonnegative and all off-diagonal elements are nonpositive, we can relax the latter constraint and consider \textit{signed} Laplacians. For our purposes, the only difference between a signed and standard Laplacian is that signed Laplacians have no constraint on the non-positivity of their off-diagonal terms, however our definitions are somewhat atypical \cite{atay2014spectrum}.\footnote{The signed Laplacian typically has the degree of vertex $u$ equal to the absolute value of the sum of the edge-weights incident on the vertex. Because we are about to allow for an arbitrary diagonal perturbation, we will also be able to recover the standard combinatorial signed Laplacian by taking $W_u \mapsto W_u + \sum_v ( \abs{w_{uv}} - w_{uv})$.}

We begin by considering a connected weighted graph $G=(V,E)$ with weight function $w:V\times V \longrightarrow \mathbb{R}$ where we require $w(u,v)=0$ whenever $(u,v) \notin E$. Additionally, we require that $w(u,v)= w(v,u)$ or that $G$ is undirected.\footnote{Although directed graphs do not correspond to the physical systems that we are presently interested in and are thus omitted, extending these results to such a setting is still  well-motivated. For some results on directed graphs, see, e.g. \cite{Chung2005,Bauer2012,Chan2015}.} For ease of presentation, we will also lower arguments to $w$ such that $w_{uv} = w(u,v)$. Since we are allowing the possibility of negative edge weights, we introduce the notation $E^+ = \left\{\{u,v\} \in E \vert w_{uv} > 0 \right\}$ for the set of all positive-weighted edges and $E^-$ for the set of all negative-weighted edges. We also define $G^\pm = (V,E^\pm)$ and note that $G^+\subseteq G$. Now we can include some standard definitions for the combinatorial and normalized Laplacians, keeping in mind that edge weights may be negative.

\subsubsection{The combinatorial Laplacian}
To define the combinatorial Laplacian for a graph $G$, we first let the degree of a vertex $u \in V$ be $d_u = \sum_{v}w_{uv}$. Then, the combinatorial graph Laplacian $L$ is
\[
    L(u,v) = \begin{cases}
        d_u & u=v \\
        -w_{uv} & u\neq v
    \end{cases}
\]
where $d_u = \sum_v w_{uv}$. 

For any function $f:V \longrightarrow \mathbb{R}$ (or $f:V \longrightarrow \mathbb{C}$), one can easily see that
\begin{equation*}
    Lf(u) = \sum_{v}w_{uv}[f(u)-f(v)]
\end{equation*}
where we have adopted the standard convention that $Lf(u) = [Lf](u)$. (This is just to say that $Lf \neq L \circ f$, since $L: \mathbb{R}^{\lvert V \rvert} \longrightarrow \mathbb{R}^{\lvert V \rvert}$.) One can easily argue that if $f$ is an eigenfunction of $L$, then $f$ satisfies
\begin{equation*}
    \lambda f(u) = \sum_v w_{uv}[f(u)-f(v)].
\end{equation*}
Now, let $W:V \longrightarrow \mathbb{R}$. We can represent $W$ as an $n \times n$ diagonal matrix and write $W_u \equiv W_{uu}$. Then, if $f$ is an eigenfunction of $L + W$, $f$ satisfies
\begin{equation}\label{eqn:combinatorial_operator}
    (\lambda-W_u) f(u) = \sum_v w_{uv}[f(u)-f(v)].
\end{equation}
Recalling the definition of the Rayleigh quotient, $R(L+W,f)$, we have that the eigenvalues of $L+W$ satisfy
\begin{equation}\label{eqn:rc-pert}
    \lambda_i = \inf_{\substack{\tiny{f \perp T_{i-1}}}} \frac{\sum_{\{u,v\} \in E(G)}w_{uv} [f(u)-f(v)]^2 + \sum_u W_u f^2(u)}{\sum_u f^2(u)}
\end{equation}
where $T^{(D)}_{i}$ is the subspace spanned by the functions $f_j$ achieving $\lambda^{(D)}_j$ for $0 \leq j \leq i$. This equation actually defines the \textit{Dirichlet eigenvalues} of the graph $G$ embedded in an appropriate host graph. In the following subsection, I will make this mapping precise.

\subsubsection{Dirichlet eigenvalues}\label{sec:Dirichlet}
For a given subgraph $S \subseteq G$, we can consider eigenfunctions of $S$ under boundary constraints and their corresponding eigenvalues. To proceed, we define the edge and vertex boundary sets 
\begin{enumerate}
    \item $\partial S = \left\{\{u,v\} \in E(G) \;\vert\; u \in V(S) , v \notin V(S) \right\}$ and 
    \item $\delta S = \left\{u \in V(G\setminus S) \;\vert\; \{u,v\} \in \partial S \; \text{for some} \; v \in V\right\}$.
\end{enumerate}
Any function $f:S\longrightarrow \mathbb{R}$ can be extended to a function $f:S\cup \delta S \longrightarrow \mathbb{R}$ with the Dirichlet boundary condition $f(u \in \delta S) = 0$ or $\restr{f}{\delta S} = 0$. \textit{Dirichlet eigenvalues} are the eigenvalues of $S$ under this boundary constraint. To be precise,
\begin{equation}\label{eqn:Dirichlet_def}
    \lambda_i^{(D)} = \inf_{\tiny{ \substack{f \perp T^{(D)}_{i-1} \\\restr{f}{\delta S} = 0}}} \frac{\sum_{\{u,v\} \in E(S) \cup \partial S}w_{uv} [f(u)-f(v)]^2}{\sum_{u \in V(S)} f^2(u)}
\end{equation}
where $T^{(D)}_{i}$ is the subspace spanned by the functions $f_j$ achieving $\lambda^{(D)}_j$ for $0 \leq j \leq i$.

Now, recall that in the previous section we had a graph $G$ with weight function $w:E(G)\longrightarrow \mathbb{R}$. We embed this graph in a host graph $G'\supseteq G$ and extend the function $w:E(G)\cup \partial G \longrightarrow \mathbb{R}$ by requiring that $W_u = \sum_{v \in \delta G}w_{uv}$. That is, if the degree of vertex $u$ in $G$ is $d_u$, then the degree of vertex $u$ in $G'$ is $d_u + W_u$. (See \Cref{fig:dirichlet}.) Now, one can explicitly impose the Dirichlet constraint on \cref{eqn:Dirichlet_def} and recover \cref{eqn:rc-pert}:
\begin{align*}
    \lambda_i^{(D)} &= \inf_{\tiny{ \substack{f \perp T^{(D)}_{i-1} \\ \restr{f}{\delta G} = 0}}} \frac{\sum_{\{u,v\} \in E(G) \cup \partial G}w_{uv} [f(u)-f(v)]^2}{\sum_{u \in V(G)} f^2(u)}\\
    &= \inf_{\tiny{f \perp T_{i-1}}} \frac{\sum_{\{u,v\} \in E(G)}w_{uv} [f(u)-f(v)]^2 + \sum_{u \in V(G)} W_u f^2(u)}{\sum_{u \in V(G)} f^2(u)}.
\end{align*}
This embedding identity is often a useful way to geometrize a physical potential and both descriptions can be useful depending upon one's goals. 
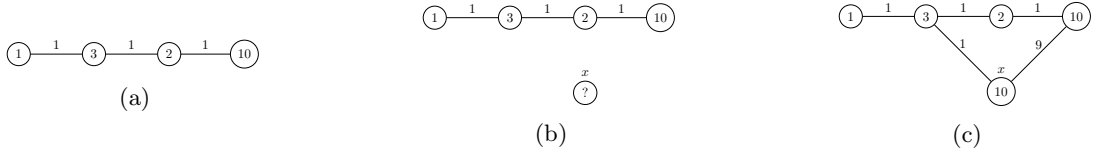
\begin{figure}[t]
    \begin{subfigure}{.33\textwidth}
    \centering
    \scalebox{.5}{%
        \begin{tikzpicture}[node distance = 2cm]
            \node[draw=black,circle] (1) {1};
            \node[draw=black,circle, right of = 1] (3) {3};
            \node[draw=black,circle, right of = 3] (2) {2};
            \node[draw=black,circle, right of = 2] (10) {10};
            
            \draw (1) -- (3) node[midway, above ]{1};
            \draw (3) -- (2) node[midway, above ]{1};
            \draw (2) -- (10) node[midway, above ]{1};
        \end{tikzpicture} %
}
    \caption{\label{fig:dirichlet1}}
    \end{subfigure}
    \begin{subfigure}{.33\textwidth}
    \centering
    \scalebox{.5}{%
        \begin{tikzpicture}[node distance = 2cm]
            \node[draw=black,circle] (1) {1};
            \node[draw=black,circle, right of = 1] (3) {3};
            \node[draw=black,circle, right of = 3] (2) {2};
            \node[draw=black,circle, right of = 2] (10) {10};
            \node[draw=black,circle, below of = 2,label={$x$}] (x) {?};
            
            \draw (1) -- (3) node[midway, above ]{1};
            \draw (3) -- (2) node[midway, above ]{1};
            \draw (2) -- (10) node[midway, above ]{1};
        \end{tikzpicture} %
}
    \caption{\label{fig:dirichlet2}}
    \end{subfigure}
    \begin{subfigure}{.33\textwidth}
    \centering
    \scalebox{.5}{%
        \begin{tikzpicture}[node distance = 2cm]
            \node[draw=black,circle] (1) {1};
            \node[draw=black,circle, right of = 1] (3) {3};
            \node[draw=black,circle, right of = 3] (2) {2};
            \node[draw=black,circle, right of = 2] (10) {10};
            \node[draw=black,circle, below of = 2,label={$x$}] (x) {10};
            
            \draw (1) -- (3) node[midway, above ]{1};
            \draw (3) -- (2) node[midway, above ]{1};
            \draw (2) -- (10) node[midway, above ]{1};
            \draw (3) -- (x) node[midway, above]{1};
            \draw (10) -- (x) node[midway, above]{9};
        \end{tikzpicture} %
}
    \caption{\label{fig:dirichlet3}}
    \end{subfigure}
    \caption{In (\subref{fig:dirichlet1}) we show the encoding of a matrix with diagonal terms $(1,3,2,10)$ and label the corresponding vertices respectively. We add a vertex $x$ where the function $f(x)=0$ in (\subref{fig:dirichlet2}). Finally, in (\subref{fig:dirichlet3}), we add edges such that each vertex is now labeled by its degree. \label{fig:dirichlet}}
\end{figure}

\subsubsection{The normalized Laplacian}
Although the expressions in \Cref{sec:Dirichlet} are sufficient to completely characterize all real matrices, we can derive a more elegant bound by perturbing the normalized Laplacian rather than the combinatorial Laplacian. We let $D = \diag{(d_u)_u}$ and define the symmetric normalized Laplacian as $\mathcal{L} = D^{-1/2}LD^{-1/2}$. Explicitly, this can be written
\[
    \mathcal{L}(u,v) = \begin{cases}
        1 & u=v \\
        -\frac{w_{uv}}{\sqrt{d_u d_v}} & u \neq v.
    \end{cases}
\]
Similar to the combinatorial Laplacian, for any function $f:V\longrightarrow \mathbb{R}$, the operator $\mathcal{L}$ satisfies
\[
    \mathcal{L}f(u) = \frac{1}{\sqrt{d_u}}\sum_{v}w_{uv}\left[\frac{f(u)}{\sqrt{d_u}}-\frac{f(v)}{\sqrt{d_v}}\right]
\]
and eigenfunctions $f$ of $\mathcal{L} + W$ satisfy 
\begin{align*}
    (\lambda-W_u)f(u) &= \frac{1}{\sqrt{d_u}} \sum_v w_{uv}\left[\frac{f(u)}{\sqrt{d_u}} - \frac{f(v)}{\sqrt{d_v}} \right].
\end{align*}

Letting $\phi = f/\sqrt{d}$,
\begin{align}\label{eqn:normalized_operator}
    (\lambda-W_u)\phi(u)d_u &= \sum_v w_{uv}\left[\phi(u) -\phi(v) \right].
\end{align}

Our treatment of \cref{eqn:combinatorial_operator,eqn:normalized_operator} can be unified by considering equations of the form
\begin{equation}\label{eqn:qweighted}
    L_q \phi(u) = (\lambda-W_u)q_u \phi(u) = \sum_v w_{uv}\left[\phi(u) -\phi(v) \right]
\end{equation}
where taking $q_u = d_u$ reproduces \cref{eqn:normalized_operator} and $q_u = 1$ reproduces \cref{eqn:combinatorial_operator}.

Hence, eigenvalues of either Laplacian are given by their respective Rayleigh quotients,
\begin{equation}
    \lambda_i = \inf_{\tiny{f \perp qT_{i-1}}} \frac{\sum_{\{u,v\}}w_{uv} [f(u)-f(v)]^2}{\sum_u q_u f^2(u)}
\end{equation}
where $T_i$ is the subspace spanned by the functions $f_j$ achieving $\lambda_j$ for $0 \leq j \leq i$. Similarly, for either Laplacian perturbed by a diagonal matrix $W$, the eigenvalues are given by
\begin{equation}\label{eqn:rc-pert2}
    \lambda_i = \inf_{\tiny{f \perp q T_{i-1}}} \frac{\sum_{\{u,v\}}w_{uv} [f(u)-f(v)]^2 + \sum_u q_u W_u f^2(u)}{\sum_u q_u f^2(u)}.
\end{equation}
This can once again be seen as Dirichlet eigenvalues as in \cref{sec:Dirichlet}, however one must be careful as the expression arising from \cref{eqn:rc-pert2} for normalized Laplacians diverges from the correct expression for Dirichlet eigenvalues of the host graph.

\subsection{The spectral gap}
Now that we have a characterization of the Dirichlet eigenvalues, we are prepared to handle the spectral gap of the operator $L_q+W$. Suppose that $\lambda_0$ has eigenfunction $\phi \geq 0$. Then, we can characterize the spectral gap of $L_q + W$ as follows.
\begin{prop}\label{prop:gap}
\begin{equation*}
    \gamma = \inf_{\tiny{g \perp q\phi^2}} \frac{\sum_{\{u,v\}}w_{uv}\phi(u)\phi(v)[g(u)-g(v)]^2}{\sum_u q_u g^2(u)\phi^2(u)}
\end{equation*}
\end{prop}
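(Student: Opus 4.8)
The plan is to run the standard \emph{ground-state substitution} (Doob transform): factor a trial function $f$ for $\lambda_1$ as $f = g\phi$ and check that the perturbed Rayleigh quotient \cref{eqn:rc-pert2} collapses to $\lambda_0$ plus exactly the quotient in the statement. First I would assemble the ingredients. Since the nonnegative eigenfunction $\phi$ achieves $\lambda_0$, we have $T_0 = \operatorname{span}(\phi)$, so \cref{eqn:rc-pert2} with $i=1$ reads $\lambda_1 = \inf\{R(L_q+W,f) : \sum_u q_u f(u)\phi(u)=0\}$. Because we have reduced to the stoquastic case, all off-diagonal entries of $L_q+W$ are nonpositive and $G$ is connected, so Perron--Frobenius (applied to a shift $cI-(L_q+W)$, in the $q$-weighted generalized sense) gives that $\phi$ is \emph{strictly} positive and $\lambda_0$ is simple. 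Strict positivity makes $f \mapsto g:=f/\phi$ a bijection of $\mathbb{C}^{\abs V}$; under it the constraint $\sum_u q_u f\phi=0$ becomes $\sum_u q_u g\phi^2=0$, i.e.\ $g \perp q\phi^2$, and $g\phi$ ranges over the entire constraint set for $\lambda_1$.

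Next I would do the algebra. Using the pointwise identity $(xa-yb)^2 = ab(x-y)^2 + (a-b)(ax^2-by^2)$ with $a=\phi(u),\,b=\phi(v),\,x=g(u),\,y=g(v)$,
\[
    w_{uv}[f(u)-f(v)]^2 = w_{uv}\phi(u)\phi(v)[g(u)-g(v)]^2 + w_{uv}(\phi(u)-\phi(v))\bigl(\phi(u)g(u)^2-\phi(v)g(v)^2\bigr).
\]
Both factors in the cross term are antisymmetric under $u \leftrightarrow v$, so summing it over unordered edges and symmetrizing gives $\sum_u \phi(u)g(u)^2 \sum_v w_{uv}(\phi(u)-\phi(v))$. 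The eigenvalue equation \cref{eqn:qweighted} for $\phi$, namely $\sum_v w_{uv}[\phi(u)-\phi(v)] = (\lambda_0-W_u)q_u\phi(u)$, turns this into $\lambda_0\sum_u q_u g^2\phi^2 - \sum_u q_u W_u g^2\phi^2$. The last piece cancels against $\sum_u q_u W_u f^2(u) = \sum_u q_u W_u g^2\phi^2$ in the numerator of \cref{eqn:rc-pert2}, leaving
\[
    R(L_q+W,\,g\phi) = \lambda_0 + \frac{\sum_{\{u,v\}} w_{uv}\phi(u)\phi(v)[g(u)-g(v)]^2}{\sum_u q_u g^2(u)\phi^2(u)}.
\]
Taking the infimum over $g \perp q\phi^2$ yields $\lambda_1 = \lambda_0 + (\text{the claimed quotient})$, hence $\gamma = \lambda_1-\lambda_0$ equals it.

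The only real subtlety is twofold, and both parts are mild. First, one must justify that $\phi$ may be taken strictly positive and $\lambda_0$ simple, so that $f=g\phi$ is a legitimate change of variables that exhausts the $\lambda_1$ constraint set; this is precisely where connectedness and stoquasticity enter, via Perron--Frobenius, and if $\phi$ vanishes somewhere one would instead restrict to its support or argue by continuity. Second, there is the bookkeeping in symmetrizing the cross term and watching the $W$-dependence cancel — not deep, but the place a sign error would hide. I would therefore treat the Perron--Frobenius step explicitly and present the cross-term symmetrization carefully, leaving the remaining expansions as routine.
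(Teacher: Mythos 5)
Your proof is correct and takes essentially the same approach as the paper: substitute $f = g\phi$, expand $w_{uv}[g(u)\phi(u)-g(v)\phi(v)]^2$ into the quotient's numerator plus a cross term, and use the eigenvalue equation for $\phi$ to convert the cross term into $(\lambda_0 - W_u)q_u g^2\phi^2$, after which the $W$-dependence cancels (this is the paper's \cref{eqn:fact1}, which you rederive via a slightly cleaner pointwise identity). Your explicit appeal to Perron--Frobenius to secure strict positivity of $\phi$, so that $f\mapsto f/\phi$ is a genuine bijection and the constraint set for $\lambda_1$ is exhausted, is a detail the paper leaves implicit but is a welcome clarification rather than a different route.
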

\begin{proof}
Before proceeding, we need the standard fact that for any $g:V\longrightarrow \mathbb{R}$,
\begin{equation}\label{eqn:fact1}
    \sum_{\{u,v\} } w_{uv}\left[g(u)\phi(u)-g(v)\phi(v)\right]^2 = \sum_{u} (\lambda_0-W_u) q_u g^2(u)\phi^2(u) + \sum_{\{u,v\} }w_{uv}\left[g(u)-g(v)\right]^2\phi(u)\phi(v).
\end{equation}
To see this, begin with \cref{eqn:combinatorial_operator} and write
\begin{dgroup*}
    \[
        \sum_{u} (\lambda_0-W_u) q_u g^2(u)\phi^2(u) = \sum_{u} g^2(u)\sum_{v} w_{uv}\phi(u)[\phi(u)-\phi(v)]  
    \]
    \[ = \sum_{u} \left[d_u g^2(u)\phi^2(u) - \sum_{v} w_{uv} g^2(u)\phi(u)\phi(v)\right]
    \]
    \[= \sum_{\{u,v\}}w_{uv}\left(g^2(u)\phi^2(u)+g^2(v)\phi^2(v) - \left[g^2(u)+g^2(v)\right]\phi(u)\phi(v)\right)
    \]
    \[
    = \sum_{\{u,v\}}w_{uv}\left(\left[g(u)\phi(u)-g(v)\phi(v)\right]^2 - \left[g^2(u)+g^2(v)-2g(u)g(v)\right]\phi(u)\phi(v)\right)
    \]
    \[
    =\sum_{\{u,v\}}w_{uv}\left(\left[g(u)\phi(u)-g(v)\phi(v)\right]^2 - \left[g(u)-g(v)\right]^2\phi(u)\phi(v)\right).\] 
\end{dgroup*}

With this in hand, we turn to $\lambda_1$.
\begin{align*}
    \lambda_1 &= \inf_{\tiny{f \perp q\phi}} \frac{\sum_{\{u,v\}}w_{uv} [f(u)-f(v)]^2 + \sum_{u} q_u W_u f^2(u)}{\sum_{u } q_u f^2(u)} \\
    &= \inf_{\tiny{g \perp q\phi^2}} \frac{\sum_{\{u,v\} }w_{uv} [g(u)\phi(u)-g(v)\phi(v)]^2 + \sum_{u } q_u W_u g^2(u)\phi^2(u)}{\sum_{u } q_u g^2(u)\phi^2(u)}\\
    &= \inf_{\tiny{g \perp q\phi^2}} \frac{\sum_{\{u,v\} }w_{uv}\phi(u)\phi(v)[g(u)-g(v)]^2  + \lambda_0 \sum_{u}q_u \phi^2(u) g^2(u)}{\sum_{u} q_u g^2(u)\phi^2(u)}\\
    &= \inf_{\tiny{g \perp q\phi^2}} \frac{\sum_{\{u,v\}}w_{uv}\phi(u)\phi(v)[g(u)-g(v)]^2}{\sum_{u } q_u g^2(u)\phi^2(u)} + \lambda_0.
\end{align*}

Thus, we have that
\begin{align*}
    \gamma = \inf_{\tiny{g \perp q\phi^2}} \frac{\sum_{\{u,v\}}w_{uv}\phi(u)\phi(v)[g(u)-g(v)]^2}{\sum_u q_u g^2(u)\phi^2(u)}.
\end{align*}
\end{proof}

\section{Warm-up: Cheeger upper bounds}\label{sec:cheeger}
\subsection{The Cheeger constant}
The Cheeger constant of a graph describes the graph's isoperimetric ratio, or the surface area to volume ratio of any subgraph. Noting that \Cref{prop:gap} gives an expression for the gap that is equivalent to the Rayleigh quotient of a weighted graph with weights $\omega_{uv} = w_{uv}\phi(u)\phi(v)$, we use $\omega$ as a modified weight function for defining both area and volume. That is, for a subgraph $S\subseteq G$ we let
\begin{enumerate}
    \item $\overline S = G\setminus S$,
    \item the boundary vertices $\delta S = \{u \in \overline S \;\vert\; u\sim v \in S\},$ 
    \item the surface area $\abs{\partial S} = \sum_{u \in V(S), v \in \delta S} w_{uv}\phi(u)\phi(v)$, and
    \item the volume $\vol(S) = \sum_{u \in S} q_u \phi^2(u)$.
\end{enumerate}
Then, we reproduce the weighted Cheeger constant of \cite{Chung2000} 
\begin{equation}\label{eqn:Cheeger_constant}
    h = \min_{S \subset G}\frac{\abs{\partial S}}{\min_{S' \in \{S,\overline S\}}\vol(S')}.
\end{equation}
Note that in the event that both $w_{uv}=1$ for all $\{u,v\} \in E(G)$ and $\phi \neq 0$ is any trivial function, this reproduces the ratio
\[
    \frac{\text{\# edges in $\partial S$}}{\text{\# vertices in S}}
\]
which is the standard Cheeger constant for an unweighted graph. 

\subsection{The upper bound}
\Cref{prop:gap} instructs us that we can use any function $g\perp q\phi^2$ to upper bound the gap and \Cref{prop:Hermitian} allows us to ignore the case that $H$ is not real. Thus, the upper bound derives from simply choosing an appropriate trial function in \Cref{prop:gap}. 
\begin{thm}\label{thm:upper}
    For any $H=L+W$ with ground-state $\phi$ corresponding to weighted Cheeger constant $h$
    \[
        \gamma \leq 2h.
    \]
\end{thm}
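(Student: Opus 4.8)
The plan is to plug the Cheeger-optimal cut into the variational formula for $\gamma$ provided by \Cref{prop:gap}. Conjugating $H$ by the diagonal unitary of \Cref{prop:Hermitian} leaves $\gamma$ unchanged and produces a ground state $\phi\geq 0$, so \Cref{prop:gap} applies and realizes $\gamma$ as the infimum, over $g\perp q\phi^2$, of the Rayleigh quotient of the weighted graph with edge weights $\omega_{uv}=w_{uv}\phi(u)\phi(v)$ and vertex measure $m_u=q_u\phi^2(u)$. From this vantage point the statement is exactly the easy half of the classical Cheeger inequality, so the proof is the usual two-valued test-function argument.

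Concretely, I would take $S\subseteq G$ attaining the minimum in \cref{eqn:Cheeger_constant}, and, using the built-in symmetry of that definition, assume $\vol(S)\leq\vol(\overline S)$, so that $h=\abs{\partial S}/\vol(S)$; restricting to the support of $\phi$ if necessary (in the stoquastic case $\phi>0$ by connectedness), every $\vol(\cdot)$ below is positive. Set $g=\vol(\overline S)\,\mathbf{1}_S-\vol(S)\,\mathbf{1}_{\overline S}$; then $\sum_u m_u g(u)=\vol(\overline S)\vol(S)-\vol(S)\vol(\overline S)=0$, so $g\perp q\phi^2$ and $g$ is admissible in \Cref{prop:gap}. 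Evaluating the quotient at $g$: on an edge inside $S$ or inside $\overline S$ the difference $g(u)-g(v)$ vanishes, while across the cut $g(u)-g(v)=\vol(S)+\vol(\overline S)=\vol(G)$, so the numerator is $\vol(G)^2\sum_{u\in V(S),\,v\in\delta S}w_{uv}\phi(u)\phi(v)=\vol(G)^2\abs{\partial S}$ and the denominator is $\vol(\overline S)^2\vol(S)+\vol(S)^2\vol(\overline S)=\vol(S)\vol(\overline S)\vol(G)$. Hence the quotient equals $\vol(G)\abs{\partial S}/\bigl(\vol(S)\vol(\overline S)\bigr)$, and since $\vol(\overline S)\geq\vol(G)/2$ this is at most $2\abs{\partial S}/\vol(S)=2h$; \Cref{prop:gap} then gives $\gamma\leq 2h$.

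As the section title suggests, this is a warm-up and no step is a genuine obstacle, but two points deserve care rather than a black-box appeal to a known Cheeger bound. First, the weight function $w$ may be signed, so one must check that the test-function computation only ever encounters the cut sum $\sum_{u\in V(S),\,v\in\delta S}w_{uv}\phi(u)\phi(v)=\abs{\partial S}$ and never a sum with indefinite sign — which it does, because the within-$S$ and within-$\overline S$ contributions are killed identically by $[g(u)-g(v)]^2=0$, independently of the signs of the $w_{uv}$. Second, one must ensure the Rayleigh quotient at $g$ is well defined, i.e.\ that its denominator is nonzero; this is exactly where positivity of the ground state on its support is used, and it is also what forces the particular coefficients $\vol(\overline S),\vol(S)$ that render $g$ orthogonal to $q\phi^2$.
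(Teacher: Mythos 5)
Your proof is correct and is essentially the paper's proof: the same subset $S$ attaining the Cheeger minimum, the same two-valued test function $g=\vol(\overline S)\mathbf{1}_S-\vol(S)\mathbf{1}_{\overline S}$, and the same substitution into \Cref{prop:gap}. The only difference is cosmetic: you factor the denominator exactly as $\vol(S)\vol(\overline S)\vol(G)$ and use $\vol(\overline S)\geq\vol(G)/2$, whereas the paper lower-bounds the denominator by $\vol(S)[\vol(S)^2+\vol(\overline S)^2]$ and then applies $(a+b)^2\leq 2(a^2+b^2)$; both routes yield $2h$.
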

\begin{proof}
For $S$ achieving the infimum in \cref{eqn:Cheeger_constant}, we put the function 
\[
    g(u) = \begin{cases}
        \vol(\overline S) & u \in S\\
        -\vol(S) & u \notin S.
    \end{cases}
\]
into \cref{prop:gap}. Without loss of generality, we assume that $\vol(S)\leq \vol(\overline{S})$ and find that
\begin{align*}
    \gamma &\leq \frac{\sum_{\{u,v\} \in E(G)}w_{uv}\phi(u)\phi(v)[g(u)-g(v)]^2}{\sum_u q_u g^2(u)\phi^2(u)}\\
    &=\frac{\left(\sum_{\{u,v\} \in \partial S}w_{uv}\phi(u)\phi(v)\right)[\vol(S) + \vol(\overline S)]^2}{\vol(\overline S)^2\sum_{u \in V(S)} q_u \phi^2(u) + \vol(S)^2\sum_{u \in V(\overline S)} q_u \phi^2(u)}\\
    &\leq \frac{(h \vol(S)) [\vol(S) + \vol(\overline S)]^2}{\vol(S)[\vol(S)^2 + \vol(\overline S)^2]}\\
    &= h \frac{[\vol(S) + \vol(\overline S)]^2}{\vol(S)^2 + \vol(\overline S)^2}\\
    &\leq 2 h.
\end{align*}
\end{proof}
\Cref{thm:upper} also holds for all Hermitian matrices by \cref{prop:Hermitian}.

\section{Removing negative edge weights}\label{sec:neg_edges}
In this section, I provide a theorem relating the spectrum of the graph $G=(V,E)$ with edge weights $w:E \longrightarrow \mathbb{R}$ to the graph $G^+ = (V,E\setminus E^-)$. For edges with negative edge weights and endpoints $(x,y)$, we consider the set of paths from $x$ to $y$ through $G^+$, denoted $P(x,y)$. Thus, a path from $x$ to $y$ is a member of the set $P(x,y)$. The strategy behind this theorem is to consider an edge $\{u,v\}$ with weight $w_{uv}<0$ as in \cref{fig:graph1}. Then, we find some path connecting $u$ and $v$ that traverses $G^+$ and route the negative weights along this path. Routing is not an uncommon approach (see, e.g. \cite{diaconis1991geometric}) and has a lot in common with the method of proving Poincar\'{e} inequalities \cite{Chung}.

\begin{figure}[t]
    \begin{subfigure}{.33\textwidth}
    \centering
    \begin{tikzpicture}
        \foreach \x in {0,...,4}{
            \foreach \y in {0,...,4}{
                \draw[line width=.5pt] (\y,0) -- (\y,4);
                \draw[line width=.5pt] (0,\x) -- (4,\x);
            }
        }
        \draw[line width=1.5pt,color=red,dashed] (2,2) -- (2,3) node[midway,right] {\scalebox{.6}{$-\frac{1}{3}$}};
        \foreach \x in {0,...,4}{
            \foreach \y in {0,...,4}{
                \node[circle,fill,scale=0.5] at (\x,\y) {};
            }
        }
        \node[dot={200}{x}] at (2,2) {};
        \node[dot={100}{y}] at (2,3) {};
    \end{tikzpicture}
     \caption{\label{fig:graph1}}
    \end{subfigure}
    \begin{subfigure}{.33\textwidth}
    \centering
    \begin{tikzpicture}
        \foreach \x in {0,...,4}{
            \foreach \y in {0,...,4}{
                \draw[line width=.5pt] (\y,0) -- (\y,4);
                \draw[line width=.5pt] (0,\x) -- (4,\x);
            }
        }
        \draw[line width=1.5pt,color=red,dashed] (2,2) -- (2,3) node[midway,right] {\scalebox{.6}{$-\frac{1}{3}$}};
        \draw[line width=1.5pt,color=blue,dashed] (2,2) -- (3,2);
        \draw[line width=1.5pt,color=blue,dashed] (3,2) -- (3,3);
        \draw[line width=1.5pt,color=blue,dashed] (3,3) -- (2,3);
        \foreach \x in {0,...,4}{
            \foreach \y in {0,...,4}{
                \node[circle,fill,scale=0.5] at (\x,\y) {};
            }
        }
                \node[dot={200}{x}] at (2,2) {};
        \node[dot={100}{y}] at (2,3) {};
    \end{tikzpicture}
     \caption{\label{fig:graph2}}
    \end{subfigure}
    \begin{subfigure}{.33\textwidth}
    \centering
    \begin{tikzpicture}
        \foreach \x in {0,...,4}{
            \foreach \y in {0,...,4}{
                \draw[line width=.5pt] (\y,0) -- (\y,4);
                \draw[line width=.5pt] (0,\x) -- (4,\x);
            }
        }
        \draw[line width=1.5pt,color=white] (2,2) -- (2,3);
        \draw[line width=1.5pt,color=white] (2,2) -- (3,2);
        \draw[line width=1.5pt,color=white] (3,2) -- (3,3);
        \draw[line width=1.5pt,color=white] (3,3) -- (2,3);
        \foreach \x in {0,...,4}{
            \foreach \y in {0,...,4}{
                \node[circle,fill,scale=0.5] at (\x,\y) {};
            }
        }
                \node[dot={200}{x}] at (2,2) {};
        \node[dot={100}{y}] at (2,3) {};
    \end{tikzpicture}     \caption{\label{fig:graph3}}
    \end{subfigure}
    \caption{A graph $G$ such that $w_{uv}=1$ for all edges $\{u,v\}$ except for $\{x,y\}$. In (\subref{fig:graph1}), the negative edge weight $w_{xy}=-1/3$ is identified. In (\subref{fig:graph2}), we identify some path from $P(x,y)$ that does not cross the negative weighted edge $\{x,y\}$. In (\subref{fig:graph3}) we redistribute the negative edge weight along the path, inducing a new graph that lower bounds the gap of $G$. \label{fig:rerouting}}
\end{figure}
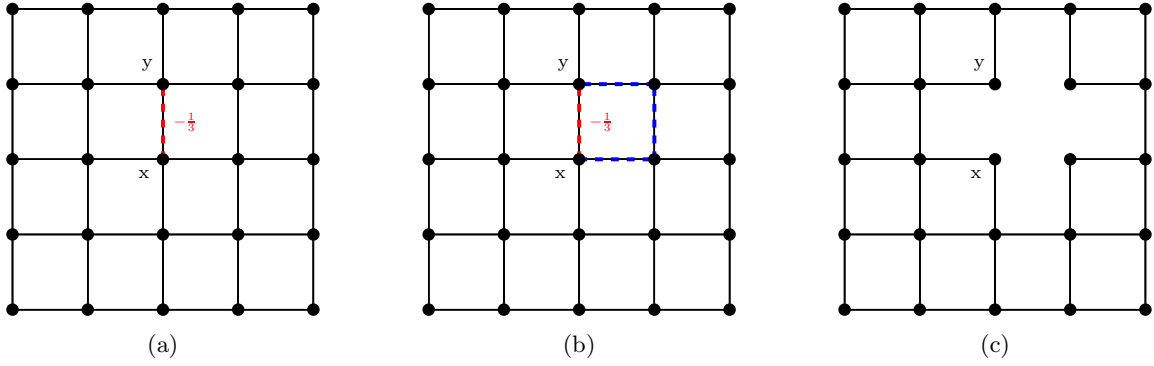

\begin{thm}\label{thm:positive}
    Suppose that for a graph $S \subseteq G$, $S^+$ is connected and there exists an $\displaystyle \alpha : \bigcup_{(x,y)\in E(S)}P(x,y) \longrightarrow [0,1]$ such that for any $\{u,v\} \in E(S)$,
    \begin{enumerate}
        \item $\displaystyle \sum_{\tiny{p \in P(u,v)}} \alpha_{p} = 1$;
        \item and $0 < \omega_{uv} = \displaystyle w_{uv} - \sum_{\tiny{w_{xy}<0}}\sum_{\tiny{\substack{p \in P(x,y) \\ (u,v) \in p}}} \abs{w_{xy}}\ell_p \alpha_p$,
    \end{enumerate}
     then, for each $i$, there exists an $\widetilde\omega \geq \omega$ such that
    \begin{align*}
        \lambda_i^{(D)} &= \inf_{\tiny{ \substack{f \perp qT^{(D)}_{i-1} \\ \restr{f}{\delta S} = 0}}} \frac{\sum_{\{u,v\} \in E(S^+)\cup \partial S}\widetilde\omega_{uv} [f(u)-f(v)]^2}{\sum_{u \in V(S)} q_u f^2(u)}.
    \end{align*}
    and $\lambda^{(D)}_0$ is unique.
\end{thm}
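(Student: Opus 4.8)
The plan is to prove this in three movements: (i) a pointwise comparison of Dirichlet quadratic forms obtained by \emph{routing} each negative edge along the prescribed paths in $S^+$; (ii) a min--max argument converting that comparison into an inequality between Dirichlet spectra; and (iii) a soft monotonicity argument that promotes the inequality to the claimed equality by enlarging $\omega$ to $\widetilde\omega$.

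For step (i), fix any $f$ with $\restr{f}{\delta S}=0$. For a path $p=(v_0,\dots,v_{\ell_p})\in P(x,y)$ in $S^+$, the telescoping Cauchy--Schwarz inequality gives
\[
[f(x)-f(y)]^2=\Bigl(\sum_{i=1}^{\ell_p}\bigl[f(v_{i-1})-f(v_i)\bigr]\Bigr)^{2}\le \ell_p\sum_{(a,b)\in p}[f(a)-f(b)]^2 .
\]
I multiply this by $\abs{w_{xy}}\alpha_p$, sum over $p\in P(x,y)$ using $\sum_p\alpha_p=1$, and then sum over all $\{x,y\}$ with $w_{xy}<0$; interchanging the summation order collects, on each edge $(u,v)\in E(S^+)$, exactly the coefficient $\sum_{w_{xy}<0}\sum_{p\ni(u,v)}\abs{w_{xy}}\ell_p\alpha_p$ appearing in hypothesis~(2). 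Subtracting the resulting bound on $\sum_{w_{xy}<0}\abs{w_{xy}}[f(x)-f(y)]^2$ from the positive-edge terms of \cref{eqn:Dirichlet_def} yields
\[
\sum_{\{u,v\}\in E(S)\cup\partial S} w_{uv}[f(u)-f(v)]^2 \;\ge\; \sum_{\{u,v\}\in E(S^+)\cup\partial S}\omega_{uv}[f(u)-f(v)]^2 ,
\]
with the $\partial S$ terms passing through unchanged; hypothesis~(2), namely $\omega_{uv}>0$, certifies that the right-hand side is the Dirichlet form of a positively weighted graph on $S^+\cup\partial S$.

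For step (ii), both sides act on the same Hilbert space (functions vanishing on $\delta S$, with the common $q$-weighted denominator $\sum_{u\in V(S)}q_u f^2(u)$), so the Courant--Fischer form of \cref{eqn:Dirichlet_def} gives $\lambda_i^{(D)}(S,w)\ge\lambda_i^{(D)}(S^+,\omega)$ for every $i$. For step (iii), observe that $t\mapsto\lambda_i^{(D)}(S^+,t\omega)$ is continuous and nondecreasing on $t\ge1$, equals $\lambda_i^{(D)}(S^+,\omega)\le\lambda_i^{(D)}(S,w)$ at $t=1$, and as $t\to\infty$ grows without bound when $i\ge1$ while tending to a finite limit $\ge\lambda_i^{(D)}(S,w)$ when $i=0$ (the latter bound from the trial function $f\equiv1$ on $V(S)$, which satisfies $\restr{f}{\delta S}=0$); the intermediate value theorem then produces a $t_i\ge1$ with $\widetilde\omega:=t_i\omega\ge\omega$ and $\lambda_i^{(D)}(S^+,\widetilde\omega)=\lambda_i^{(D)}(S,w)$. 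The displayed infimum is then just the standard variational characterization of the $i$-th Dirichlet eigenvalue of $(S^+\cup\partial S,\widetilde\omega)$, with $T_{i-1}^{(D)}$ read as the span of that operator's own lower eigenfunctions. Uniqueness of $\lambda_0^{(D)}$ follows from Perron--Frobenius: $S^+$ connected together with $\widetilde\omega>0$ makes the Dirichlet Laplacian of $(S^+,\widetilde\omega)$ irreducible with nonpositive off-diagonal entries, so its least eigenvalue is simple with a nonvanishing (positive) eigenfunction.

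I expect the principal obstacle to be the bookkeeping in step (i) rather than any deep point: one must verify that interchanging the double sum reproduces the coefficient of hypothesis~(2) \emph{exactly}, that a path traversing an edge several times is charged with the right multiplicity, and that the weight $q_u$ rides through the estimate inertly so the argument also covers the normalized Laplacian (for which, as noted after \cref{eqn:rc-pert2}, the naive host-graph Dirichlet expression must be read with care). It is also worth recording why $\widetilde\omega$ must be allowed to depend on $i$: a single positive weighting reproducing the entire signed spectrum would force an identity of quadratic forms whose off-diagonal parts have opposite signs — impossible — so the one-eigenvalue-at-a-time promotion is essential, not a mere convenience.
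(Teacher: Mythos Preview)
Your approach is essentially the paper's: the routing step (i) is exactly the inequality the paper attributes to ``Jensen'', and the Perron--Frobenius appeal is identical. The paper is terser on the existence of $\widetilde\omega$---it simply observes that the constrained infimum with weight $\omega$ lies at or below $\lambda_i^{(D)}$ and asserts the existence of $\widetilde\omega\ge\omega$ restoring equality without further comment---whereas you supply an explicit scaling-plus-IVT construction together with a Courant--Fischer comparison. Your reading of $T_{i-1}^{(D)}$ in the conclusion as the span of the \emph{new} operator's low eigenfunctions is slightly stronger than the paper's (which keeps the original $T_{i-1}^{(D)}$ fixed throughout its proof), but both interpretations are consistent with the statement as written.

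One small repair is needed in your $i=0$ analysis. You claim $\lambda_0^{(D)}(S^+,t\omega)$ tends to a finite limit as $t\to\infty$, but the Dirichlet Rayleigh quotient is homogeneous of degree one in the edge weights, so $\lambda_0^{(D)}(S^+,t\omega)=t\,\lambda_0^{(D)}(S^+,\omega)$, which diverges whenever $\lambda_0^{(D)}(S^+,\omega)>0$ (i.e., whenever $\partial S\neq\emptyset$). Your IVT argument for $i\ge1$ therefore already covers $i=0$ in that case. When $\partial S=\emptyset$, both $\lambda_0^{(D)}(S,w)$ and $\lambda_0^{(D)}(S^+,\omega)$ vanish---use $f\equiv1$ for the upper bound and the pointwise form inequality of step (i) for nonnegativity---so $\widetilde\omega=\omega$ already suffices and no scaling is needed. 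With that adjustment the argument is complete.
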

\begin{proof}
Consider
\begin{align*}
    \lambda_i^{(D)} &= \inf_{\tiny{ \substack{f \perp qT^{(D)}_{i-1} \\ \restr{f}{\delta S} = 0}}} \frac{\sum_{\{u,v\} \in E(S)\cup \partial S}w_{uv} [f(u)-f(v)]^2}{\sum_{u \in V(S)} q_u f^2(u)}\\
    &\geq \inf_{\tiny{ \substack{f \perp qT^{(D)}_{i-1} \\ \restr{f}{\delta S} = 0}}} \frac{\sum_{\{u,v\} \in E(S^+)\cup \partial S}\left(w_{uv}-\sum_{\tiny{w_{xy}<0}}\sum_{\tiny{\substack{p \in P(x,y) \\ (u,v) \in p}}} \abs{w_{xy}}\ell_p \alpha_p\right) [f(u)-f(v)]^2}{\sum_{u \in V(S)} q_u f^2(u)}\\
    &=\inf_{\tiny{ \substack{f \perp qT^{(D)}_{i-1} \\ \restr{f}{\delta S} = 0}}} \frac{\sum_{\{u,v\} \in E(S^+)\cup \partial S}\omega_{uv} [f(u)-f(v)]^2}{\sum_{u \in V(S)} q_u f^2(u)}
\end{align*}
where we have applied Jensen's inequality. Thus, there exists some $ \widetilde\omega \geq \omega$ such that
\begin{align*}
    \lambda_i^{(D)} &= \inf_{\tiny{ \substack{f \perp qT^{(D)}_{i-1} \\ \restr{f}{\delta S} = 0}}} \frac{\sum_{\{u,v\} \in E(S^+)\cup \partial S}\widetilde\omega_{uv} [f(u)-f(v)]^2}{\sum_u q_u f^2(u)}.
\end{align*}
\end{proof}

Furthermore, since this is just the Rayleigh quotient corresponding to the Dirichlet eigenvalues of a connected graph, the Perron-Frobenius theorem applies and we also have that $\lambda_0^{(D)}$ is unique. \Cref{thm:positive} also applies to the characterization of $\gamma$ in \cref{prop:gap}:
\begin{cor}\label{cor:positive}
    Suppose that for a graph $G=(V,E)$, $G^+$ is connected and there exists an $\displaystyle \alpha : \bigcup_{(x,y)\in E}P(x,y) \longrightarrow [0,1]$ such that for any $(u,v) \in E$
    \begin{enumerate}
    \item $\displaystyle \sum_{\tiny{p \in P(u,v)}} \alpha_{p} = 1$ and
    \item $\displaystyle \widetilde\omega_{uv} > w_{uv}\phi(u)\phi(v) - \sum_{\tiny{w_{xy}<0}}\sum_{\tiny{\substack{p \in P(x,y) \\ \{u,v\} \in p}}} \abs{w_{xy}}\phi(x)\phi(y)\ell_p \alpha_p$,
\end{enumerate}
where $\ell_{p} = \abs{p}$ is the length of path $p$,
\begin{equation}\label{eqn:rgap}
    \gamma = \inf_{\tiny{g \perp q\phi^2}} \frac{\sum_{\{u,v\} \in E(G^+)}\widetilde\omega_{uv}[g(u)-g(v)]^2}{\sum_u q_u g^2(u)\phi^2(u)}.
\end{equation}
\end{cor}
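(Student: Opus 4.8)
The plan is to mimic the proof of \Cref{thm:positive}, but starting from the variational formula for $\gamma$ supplied by \Cref{prop:gap} instead of from the Dirichlet characterization. Write the numerator there as $N(g)=\sum_{\{u,v\}\in E}w_{uv}\phi(u)\phi(v)[g(u)-g(v)]^2$ and the denominator as $D(g)=\sum_u q_u g^2(u)\phi^2(u)$, so $\gamma=\inf_{g\perp q\phi^2}N(g)/D(g)$. The first step is to split $N(g)$ into the contribution of $E(G^+)$ and the contribution of $E^-$; the latter equals $-\sum_{\{x,y\}\in E^-}\abs{w_{xy}}\phi(x)\phi(y)[g(x)-g(y)]^2$ and is exactly the term to be absorbed into the positive edges.

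First I would handle a single negative edge $\{x,y\}$. For any path $p=(x=v_0,v_1,\dots,v_{\ell_p}=y)\in P(x,y)$ through $G^+$, telescoping followed by Cauchy--Schwarz gives $[g(x)-g(y)]^2\le \ell_p\sum_{\{u,v\}\in p}[g(u)-g(v)]^2$; averaging this over $p$ against the weights $\alpha_p$, which sum to $1$ by hypothesis~1, yields
\[
  [g(x)-g(y)]^2\ \le\ \sum_{p\in P(x,y)}\alpha_p\,\ell_p\!\!\sum_{\{u,v\}\in p}[g(u)-g(v)]^2 .
\]
Substituting this into the $E^-$ part of $N(g)$, exchanging the order of summation so that each $G^+$-edge $\{u,v\}$ collects the contributions of all negative edges whose chosen paths run through it, and recombining with the $E(G^+)$ part shows
\[
  N(g)\ \ge\ \sum_{\{u,v\}\in E(G^+)}\omega'_{uv}[g(u)-g(v)]^2,\qquad
  \omega'_{uv}:=w_{uv}\phi(u)\phi(v)-\!\!\sum_{w_{xy}<0}\ \sum_{\substack{p\in P(x,y)\\ \{u,v\}\in p}}\!\!\abs{w_{xy}}\phi(x)\phi(y)\,\ell_p\,\alpha_p .
\]
Dividing by $D(g)$, which is untouched, and taking the infimum over $g\perp q\phi^2$ gives $\gamma\ge \gamma'$, where $\gamma'$ is the right-hand side of \cref{eqn:rgap} with $\widetilde\omega$ replaced by $\omega'$.

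It then remains to upgrade this inequality to the claimed identity for a suitable $\widetilde\omega\ge\omega'$. The key observation is that $D$ and the constraint $g\perp q\phi^2$ do not depend on the edge weights, so for any positive weight function $\eta$ on $E(G^+)$ the quantity $\Gamma(\eta):=\inf_{g\perp q\phi^2}\big(\sum_{\{u,v\}}\eta_{uv}[g(u)-g(v)]^2\big)/D(g)$ is precisely the first nonzero eigenvalue of the weighted graph Laplacian on $G^+$ with vertex measure $q_u\phi^2(u)$; since $G^+$ is connected, Perron--Frobenius makes the ground state unique and $\Gamma(\eta)$ genuinely a spectral gap. From the variational formula (equivalently, since $\Gamma(\eta)$ is an eigenvalue of a matrix depending linearly on $\eta$), $\Gamma$ is continuous and coordinatewise non-decreasing, and $\Gamma(\eta)\to\infty$ as the weights grow; since $\Gamma(\omega')=\gamma'\le\gamma$, the intermediate value theorem applied to $t\mapsto\Gamma(\omega'+t\mathbf 1)$ yields a $t_0\ge 0$ with $\Gamma(\omega'+t_0\mathbf 1)=\gamma$. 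Taking $\widetilde\omega=\omega'+t_0\mathbf 1\ge\omega'$ (which satisfies hypothesis~2 and is positive, after an arbitrarily small perturbation in the degenerate case $\gamma=\gamma'$) gives \cref{eqn:rgap}, and uniqueness of $\lambda_0^{(D)}$ follows from Perron--Frobenius exactly as in \Cref{thm:positive}.

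I expect the only real obstacle to be this last step. The rerouting bookkeeping is routine, being essentially the computation in \Cref{thm:positive} verbatim, but one must be careful that after redistribution the object is still the spectral gap of a bona fide connected weighted graph --- that the vertex measure and the kernel of the Laplacian are untouched --- and that monotonicity and continuity of the first nonzero eigenvalue in the weights are robust enough to run the intermediate-value argument. A secondary point worth verifying is that the paths in $P(x,y)$ avoid the negative edges (they do, since they lie in $G^+$), so that no negative-weight term is ever reintroduced during the exchange of summations.
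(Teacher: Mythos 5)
Your argument is correct and mirrors the redistribution argument the paper uses to prove \cref{thm:positive} (the telescoping/averaging step you call Cauchy--Schwarz is exactly the paper's application of Jensen's inequality, and the paper derives the corollary by saying the theorem "also applies" to \cref{prop:gap} without further proof). Your intermediate-value argument for the existence of $\widetilde\omega$ makes explicit a step that the paper simply asserts, and is a welcome clarification rather than a deviation.
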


The unsightliness of $\widetilde\omega$ is not lost on me. Nonetheless, the expression is quite intuitive. Basically, a potentially useful Cheeger-type bound exists whenever we can redistribute negative weighted edges along paths through $G^+$ connecting them. I present this form, however, because it is unlikely that in practical situations we will be faced with something that can be easily routed along a single path. Such a statement is easy to derive by choosing unique paths satisfying the constraints of \cref{thm:positive}, however. \cref{cor:simpler} provides one such simplification.

\begin{remark}
Although there exist cases where one can create cuts such that condition 2 above is always unachievable, in many cases, this is handled by the unitary rotation considered in \cref{sec:rayleigh}.
\end{remark}

\section{Two Dirichlet Cheeger inequalities}
In this section, I present Cheeger inequalities using a technique similar to \cite{Chung2000}. Unlike \cite{Chung2000}, we wish to construct an inequality for as broad a class of matrices as possible. A theorem similar to Theorem 1 was originally derived and presented by me in \cite{Jarret2014}, however, at that time, I did not realize that it could be significantly strengthened to the more useful one below. First, we need to bound the contribution of the term $W$ to the eigenvalues $\lambda_i$. Because of \cref{thm:positive,cor:positive}, we only need to consider the case of nonnegative edge weights.

\begin{lemma}\label{lem:potential}
    For a graph $G=(V,E)$, suppose $\phi : V \longrightarrow \mathbb{R}$ satisfies
    \begin{equation}\label{eqn:potential_a1}
            \left(\lambda -W_u\right)q_u \phi(u) = \sum_{v\sim u} w_{uv} \left[\phi(u)-\phi(v) \right].
    \end{equation}
    for $w > 0$. Then,  
    \begin{align*}
        \lambda &\geq \max_{S' \in \{S,V\setminus S\} }\left( \frac{\sum_{u \in S'} \left(W_u +\lambda_0^D(S')\right) q_u \phi(u)^2}{\sum_{u \in S'} q_u\phi(u)^2}\right)
    \end{align*}
    for $S = \left\{u \in V \;\vert\; \phi(u) \geq 0 \right\}$ and $\lambda_0^D(S')$ the lowest Dirichlet eigenvalue of $S' \subseteq G$.
\end{lemma}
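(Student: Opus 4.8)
The plan is to test the eigenfunction equation \eqref{eqn:potential_a1} against the positive part of $\phi$ and recognize the resulting edge sum as a Dirichlet Rayleigh quotient on $S$. Write $\phi_+ = \max(\phi,0)$, so that $\phi_+$ agrees with $\phi$ on $S=\{u : \phi(u)\geq 0\}$, vanishes on $V\setminus S$, and satisfies $\phi(u)\phi_+(u)=\phi_+(u)^2$ at every vertex. Multiplying \cref{eqn:potential_a1} by $\phi_+(u)$ and summing over $u\in V$ gives
\[
\sum_{u\in S}(\lambda - W_u)\,q_u\,\phi(u)^2 \;=\; \sum_{u} \phi_+(u)\sum_{v\sim u} w_{uv}\bigl[\phi(u)-\phi(v)\bigr].
\]
It therefore suffices to lower bound the right-hand side by $\lambda_0^D(S)\sum_{u\in S}q_u\phi(u)^2$; rearranging then yields the asserted bound for $S'=S$, and running the identical argument on $-\phi$ (whose positive part is supported exactly on $V\setminus S$) yields it for $S'=V\setminus S$, hence for the maximum.

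For the lower bound, the key observation — and the only place the hypothesis $w>0$ is used — is a termwise comparison: whenever $\phi_+(u)>0$ (so $u\in S$), for $v\in S$ we have $\phi(u)-\phi(v)=\phi_+(u)-\phi_+(v)$, while for $v\notin S$ we have $\phi(u)-\phi(v)=\phi_+(u)-\phi(v)\geq \phi_+(u)=\phi_+(u)-\phi_+(v)$; and for $u\notin S$ the prefactor $\phi_+(u)$ annihilates the term. Since $w_{uv}>0$ and $\phi_+(u)\geq 0$, this gives
\[
\sum_{u}\phi_+(u)\sum_{v\sim u} w_{uv}\bigl[\phi(u)-\phi(v)\bigr]\;\geq\;\sum_{u}\phi_+(u)\sum_{v\sim u} w_{uv}\bigl[\phi_+(u)-\phi_+(v)\bigr]\;=\;\sum_{\{u,v\}\in E}w_{uv}\bigl[\phi_+(u)-\phi_+(v)\bigr]^2,
\]
the last equality being the usual symmetrization over undirected edges. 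Because $\phi_+$ vanishes off $S$, this final sum runs only over $E(S)\cup\partial S$, i.e.\ it is precisely the Dirichlet form of $S$ evaluated on $\phi_+|_S$ extended by zero on $\delta S$. Invoking the variational characterization of Dirichlet eigenvalues from \cref{sec:Dirichlet} with the $q$-weighted volume in the denominator, this is at least $\lambda_0^D(S)\sum_{u\in S}q_u\phi_+(u)^2=\lambda_0^D(S)\sum_{u\in S}q_u\phi(u)^2$, which closes the chain and, after dividing by $\sum_{u\in S}q_u\phi(u)^2$, produces exactly the claimed inequality.

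I expect the mechanical steps — the multiply-and-sum identity and the edge symmetrization — to be routine. The point needing care is the boundary bookkeeping: the comparison $\phi(u)-\phi(v)\geq\phi_+(u)-\phi_+(v)$ must be applied with the correct orientation on $\partial S$, and this is exactly where positivity of the weights is essential (a negative $w_{uv}$ would flip the inequality), which is why \cref{thm:positive,cor:positive} were needed first to dispose of negative edges. One should also confirm that $\lambda_0^D$ is taken with the $q$-weighting consistent with its use elsewhere, keeping in mind the caveat noted after \cref{eqn:rc-pert2} that for the normalized Laplacian this must be read off \cref{eqn:rc-pert2} rather than as a genuine host-graph Dirichlet eigenvalue. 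The degenerate case $\phi_+\equiv 0$ on $S$ makes the quotient $0/0$ and is vacuous; it does not arise whenever the bound is nontrivial, since then $\phi$ changes sign.
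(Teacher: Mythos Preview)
Your proof is correct and follows essentially the same approach as the paper: multiply the eigenequation by $\phi$ restricted to $S'$, sum, split into interior and boundary contributions, and bound the resulting Dirichlet form from below via the variational principle. The only cosmetic difference is that you package the restriction as $\phi_+=\max(\phi,0)$ and obtain the boundary inequality via the termwise comparison $\phi(u)-\phi(v)\geq\phi_+(u)-\phi_+(v)$, whereas the paper expands $(\phi(u)-\phi(v))\phi(u)=\phi(u)^2-\phi(u)\phi(v)$ on $\partial S'$ and drops the cross term using $\phi(u)\phi(v)\leq 0$; these are the same sign argument in different clothing.
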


\begin{proof}
Without loss of generality, assume that $S' \in \{S, V \setminus S\}$ achieves the maximum above. Now,
    \begin{dgroup*}
    \[
        \sum_{u \in S'} (\lambda - W_u)q_u \phi(u)^2 = \sum_{u \in S'}\sum_{v \sim u} w_{uv} (\phi(u)-\phi(v))\phi(u)
    \]
    \[= \sum_{\{u,v\} \in E(S')}w_{uv} (\phi(u)-\phi(v))^2 + \sum_{\substack{\{u,v\} \in \partial S' \\ u \in S'}}w_{uv}(\phi(u)-\phi(v))\phi(u)
    \]
    \[\geq \lambda_0^{D}(S')\sum_{u \in S'} q_u\phi^2(u) - \sum_{\substack{\{u,v\} \in \partial S' \\ u \in S'}}w_{uv}\phi(v)\phi(u)
    \]
    \[
        \geq \lambda_0^{D}(S')\sum_{u \in S'} q_u\phi^2(u).
    \]
    \end{dgroup*}
    Above, the first inequality follows from the definition of the Dirichlet eigenvalues and the second because ${\phi(S') \phi(\overline{S'}) \leq 0}$.
\end{proof}

\begin{cor}\label{cor:potential}
    For a graph $G=(V,E)$, suppose $\phi : V \longrightarrow \mathbb{R}$ satisfies
    \begin{equation}
            \left(\lambda -W_u\right)q_u \phi(u) = \sum_{v\sim u} w_{uv} \left[\phi(u)-\phi(v) \right].
    \end{equation}
    for $w > 0$. Then,  
    \begin{align*}
        \lambda &\geq \max_{S' \in \{S,V\setminus S\} }\left( \frac{\sum_{u \in S'} W_u q_u \phi(u)^2}{\sum_{u \in S'} q_u\phi(u)^2}\right)
    \end{align*}
    for $S = \left\{u \in V \;\vert\; \phi(u) \geq 0 \right\}$.
\end{cor}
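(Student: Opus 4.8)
The plan is to obtain \cref{cor:potential} as an immediate weakening of \cref{lem:potential}, the only extra input being the nonnegativity of the lowest Dirichlet eigenvalue of every subgraph. Since \cref{thm:positive,cor:positive} have reduced us to the case $w>0$, the Dirichlet Rayleigh quotient in \cref{eqn:Dirichlet_def} has numerator $\sum_{\{u,v\}} w_{uv}[f(u)-f(v)]^2$ a sum of nonnegative terms and denominator $\sum_{u} q_u f^2(u) > 0$ (recall $q_u \in \{1, d_u\}$ and $d_u = \sum_v w_{uv} > 0$ on the connected graph), so every such quotient is nonnegative and hence $\lambda_0^D(S') \geq 0$ for each $S' \subseteq G$.

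With that in hand I would keep the same set $S = \{u \in V : \phi(u) \geq 0\}$ as in \cref{lem:potential} and, for each $S' \in \{S, V\setminus S\}$ with $\sum_{u \in S'} q_u \phi^2(u) > 0$, split the numerator:
\[
\frac{\sum_{u \in S'} (W_u + \lambda_0^D(S')) q_u \phi(u)^2}{\sum_{u \in S'} q_u \phi(u)^2} = \frac{\sum_{u \in S'} W_u q_u \phi(u)^2}{\sum_{u \in S'} q_u \phi(u)^2} + \lambda_0^D(S') \;\geq\; \frac{\sum_{u \in S'} W_u q_u \phi(u)^2}{\sum_{u \in S'} q_u \phi(u)^2}.
\]
Taking the maximum over the (at most two) admissible choices of $S'$ on both sides is monotone and so preserves this inequality; the left-hand maximum is exactly the quantity that \cref{lem:potential} bounds below by $\lambda$. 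Chaining the two inequalities gives $\lambda \geq \max_{S' \in \{S, V\setminus S\}} \big(\sum_{u \in S'} W_u q_u \phi(u)^2 \big/ \sum_{u \in S'} q_u \phi(u)^2\big)$, which is the claim.

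The only point deserving a word of care is the degenerate case in which one of $S$, $V\setminus S$ is empty or has zero $\phi$-weight, so its ratio is $0/0$; the convention, consistent with the statement and proof of \cref{lem:potential}, is to omit that term from the maximum. As $\phi$ is a nonzero eigenfunction this degeneracy affects at most one of the two sets, so the maximum is always well defined. Beyond this bookkeeping there is no real obstacle: \cref{cor:potential} simply trades the sharper additive term $\lambda_0^D(S')$ for the cleaner expression depending only on $W$, $q$, and $\phi$, and the whole argument is a two-line consequence of the lemma together with $\lambda_0^D \geq 0$.
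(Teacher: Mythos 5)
Your proposal is correct and matches the paper's intent: \cref{cor:potential} is presented without proof precisely because it is the immediate weakening of \cref{lem:potential} obtained by discarding the nonnegative term $\lambda_0^D(S')$, which is exactly what you do. The remark on the degenerate case (when one of $S$, $V\setminus S$ carries zero $\phi$-weight, consistent with the ``without loss of generality'' opening of the lemma's proof) is a sensible bit of bookkeeping but not a new idea.
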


\Cref{cor:potential} allows us to derive our primary Cheeger inequality, which generalizes from \cite{Chung2000}:

\begin{thm}\label{thm:cheeger}
    Suppose $\phi_i : V \longrightarrow \mathbb{R}$, satisfy
    \begin{equation}\label{eqn:assumption}
            \left[\lambda_i -W_u\right]q_u \phi_i(u) = \sum_{v \sim u} w_{uv} \left[\phi_i(u)-\phi_i(v) \right]
    \end{equation}
    and let $\gamma = \lambda_1 - \lambda_0$. Then,
    \[
        \gamma \geq \sqrt{h^2 + Q^2} - Q
    \]
    where
    \[
        Q = \frac{\sum_{u \in S} d_u\phi_1^2(u)}{\sum_{u \in S}q_u\phi_1^2(u)}.
    \]
\end{thm}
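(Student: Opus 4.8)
The plan is to follow the classical Cheeger argument à la \cite{Chung2000}, but carried out on the ``gapped'' Rayleigh quotient of \Cref{prop:gap} rather than on a bare Laplacian, and with the potential term $W$ absorbed via \Cref{cor:potential}. Write $g$ for a (real) eigenfunction achieving $\lambda_1$ in \Cref{prop:gap}, so that $g\perp q\phi^2$ and
\[
    \gamma = \frac{\sum_{\{u,v\}} w_{uv}\phi(u)\phi(v)[g(u)-g(v)]^2}{\sum_u q_u g^2(u)\phi^2(u)}.
\]
Because $g$ is orthogonal to $q\phi^2$ it changes sign, so split $V$ into $S=\{g\geq 0\}$ and its complement; replacing $g$ by its positive part $g_+$ on $S$ (and arguing symmetrically on $\overline S$) can only decrease the numerator while controlling the denominator, so it suffices to bound $\gamma$ from below on a one-signed function supported on $S$. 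The standard move is then the ``co-area''/layer-cake estimate: order the vertices of $S$ by decreasing value of $g$, consider the threshold sets $S_t=\{u: g(u)>t\}$, and use Cauchy--Schwarz in the form
\[
    \sum_{\{u,v\}} w_{uv}\phi(u)\phi(v)\,[g^2(u)-g^2(v)]
    \;\le\;
    \Bigl(\sum w_{uv}\phi(u)\phi(v)[g(u)-g(v)]^2\Bigr)^{1/2}
    \Bigl(\sum w_{uv}\phi(u)\phi(v)[g(u)+g(v)]^2\Bigr)^{1/2},
\]
bounding the first factor by $\sqrt{\gamma}\,\bigl(\sum_u q_u g^2\phi^2\bigr)^{1/2}$ and the left side below by $h\sum_u q_u g^2(u)\phi^2(u)$ via the definition \eqref{eqn:Cheeger_constant} of the weighted Cheeger constant applied to each threshold cut. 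The second factor is where the degree term enters: expand $[g(u)+g(v)]^2\le 2[g(u)-g(v)]^2 + \text{(cross terms)}$, or more sharply estimate it against $\sum_u d_u g^2(u)\phi^2(u)$ plus a multiple of the Dirichlet energy; this is exactly what produces the quantity $Q=\frac{\sum_{u\in S}d_u\phi_1^2(u)}{\sum_{u\in S}q_u\phi_1^2(u)}$ once one uses \Cref{cor:potential} (with $\phi=\phi_1$ there playing the role of $g\phi$) to convert the potential contribution $\sum W_u q_u g^2\phi^2$ into a nonnegative term.

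Assembling these gives an inequality of the shape $h\,A \le \sqrt{\gamma}\,\sqrt{A}\,\sqrt{\gamma A + 2QA}$ where $A=\sum_u q_u g^2(u)\phi^2(u)$, i.e. $h^2 \le \gamma(\gamma+2Q) = (\gamma+Q)^2 - Q^2$, which rearranges to $\gamma \ge \sqrt{h^2+Q^2}-Q$ since $\gamma\ge 0$. The only genuinely delicate points are: (i) verifying that the threshold-set argument is compatible with the \emph{weighted} boundary $|\partial S_t| = \sum w_{uv}\phi(u)\phi(v)$ and the volume $\vol(S_t)=\sum q_u\phi^2(u)$ — this is where connectivity of $G^+$ (hence $w>0$, guaranteed by \Cref{cor:positive}) and positivity of $\phi$ are used so that each $|\partial S_t|$ is a genuine nonnegative cut weight and the min over $\{S_t,\overline{S_t}\}$ in \eqref{eqn:Cheeger_constant} is handled by the choice of which side carries the one-signed function; and (ii) getting the correct combinatorial weight, namely showing the ``$+$'' cross term is controlled by precisely $d_u$ (not $q_u$), so that $Q$ appears rather than $1$. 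I expect step (ii) — tracking the $g(u)+g(v)$ factor sharply enough to land $\sqrt{h^2+Q^2}-Q$ instead of a lossier bound like $h^2/(2Q)$ or $h^2/(\gamma+2Q)$ with the wrong constant — to be the main obstacle; it is the step that distinguishes this result from the weak bound \eqref{eqn:old}, and it relies on not discarding the $\lambda_0$-term prematurely and on the identity \eqref{eqn:fact1} feeding \Cref{prop:gap}.
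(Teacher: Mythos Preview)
Your outline matches the paper's proof closely: restrict to the positive part of the gap minimizer, apply Cauchy--Schwarz to produce the $\sum\omega_{uv}|g^2(u)-g^2(v)|$ term, lower-bound that by $hA$ via layer-cake over threshold cuts, and control the $[g(u)+g(v)]^2$ factor using the $\phi_0$-eigenvalue equation together with \Cref{cor:potential}. Two points need tightening, and both are exactly where you say you expect trouble.

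First, the inequality $\gamma\ge\Phi$ (where $\Phi$ is the Rayleigh quotient of the positive part $g_+$) does not follow from ``replacing $g$ by $g_+$ can only decrease the numerator,'' because the denominator $\sum q_u g_+^2\phi^2$ shrinks as well. The paper instead derives the pointwise relation $\gamma\,q_u f(u)\phi_0^2(u)=\sum_v\omega_{uv}[f(u)-f(v)]$ for the minimizer (via a variational argument), multiplies by $f(u)$, sums over $S$, and uses $f(u)f(v)\le 0$ on $\partial S$ to obtain $\gamma\ge\Phi$ directly. Second, your assembled inequality $h^2\le\gamma(\gamma+2Q)$ is the right target, but the route you write---bounding the first Cauchy--Schwarz factor by $\gamma A$ and the second by $(\gamma+2Q)A$---is inconsistent: the second factor actually expands to $2QA+2\gamma A-\Phi A$, and getting it down to $(\gamma+2Q)A$ would need $\Phi\ge\gamma$, the opposite of what you used in the first factor. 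The paper's fix is to keep $\Phi$ explicit throughout, arriving at $h^2\le\Phi(2\gamma+2Q-\Phi)$, and then note this quadratic in $\Phi$ is increasing on $[0,\gamma]$, so its maximum there is $\gamma^2+2Q\gamma$, yielding $\gamma\ge\sqrt{h^2+Q^2}-Q$.
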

\begin{proof}
    For a particular vertex $u_0$, we begin by considering the one-parameter family
    \[
        f_\epsilon(u) = \begin{cases}
            f(u_0) + \epsilon \vol\left(G\setminus\{u_0\}\right) & u = u_0 \\
            f(u) - \epsilon q_{u_0} \phi_0^2(u_0) & \text{otherwise}
        \end{cases}
    \]
    where $f$ achieves the infimum in \cref{eqn:rgap}. Clearly, $f_\epsilon$ satisfies $f_\epsilon \perp q\phi_0^2$. Then, we introduce this into the Rayleigh quotient $R(f_\epsilon)$ and note that $\frac{d}{d\epsilon}R(f_\epsilon) \vert_{\epsilon = 0} = 0$ \footnote{In an earlier version of this paper, I did not pursue a variational approach, joking that I was not a masochist. However, masochism seems inevitable, as the previous approach was inconsistent with \cref{eqn:rgap}.} 
    \[
      0 = \restr{\frac{d R(f(\epsilon))}{d \epsilon}}{\epsilon = 0} = \frac{d}{d\epsilon} \left[ \frac{\displaystyle\sum_{\{u,v\}}w_{uv}\phi_0(u)\phi_0(v)[f_\epsilon(u)-f_\epsilon(v)]^2}{\displaystyle\sum_u q_u f_\epsilon^2(u)\phi_0^2(u)} \right]_{\epsilon = 0}
        = \frac{d}{d\epsilon} \left[ \frac{\displaystyle\sum_{\substack{\{u,v\} \\ u,v \neq u_0}}\omega_{uv}[f(u)-f(v)]^2 + \sum_{u\neq u_0}\omega_{u_0 u}\left(f(u_0)-f(u) + \epsilon \vol(G) \right]_{\epsilon = 0}^2 }{\displaystyle\sum_{u\neq u_0} q_u \left(f(u)-\epsilon q_{u_0}\phi_0^2(u_0) \right)^2\phi_0^2(u) + q_{u_0}\left(f(u_0)+ \epsilon \vol(G\setminus \{u_0\}) \right)^2} \right]_{\epsilon = 0}
        = \frac{2\sum_{u\neq u_0}w_{u_0 u}\left(f(u_0)-f(u) \right)\vol(G)}{\sum_u q_u f^2(u)\phi_0^2(u)} - 2R(f)q_{u_0}\phi_0^2(u_0) \left( \frac{-\sum_{u\neq u_0} q_u f(u)\phi_0^2(u)  + f(u_0)\vol\left(G\setminus \{u_0\}\right)}{{\sum_u q_u f^2(u)\phi_0^2(u)}}\right)
        =\sum_{u\neq u_0}w_{u_0 u}\left(f(u_0)-f(u) \right)\vol(G) - \gamma q_{u_0}\phi_0^2(u_0) \left({f(u_0)\vol(G\setminus \{u_0\}) - \sum_{u\neq u_0} q_u f(u) \phi_0^2(u)}\right)
        =\sum_{u\neq u_0}w_{u_0 u}\left(f(u_0)-f(u) \right)\vol(G) - \gamma q_{u_0}\phi_0^2(u_0) \left( f(u_0)\vol(G\setminus \{u_0\}) + q_{u_0}f(u_0)\phi_0^2(u_0)  \right)
      =\sum_{u\neq u_0}w_{u_0 u}\left(f(u_0)-f(u) \right)\vol(G) - \gamma q_{u_0}f(u_0)\phi_0^2(u_0)\vol(G)
      =\sum_{u\neq u_0}w_{u_0 u}\left(f(u_0)-f(u) \right) - \gamma q_{u_0}f(u_0)\phi_0^2(u_0) .
    \]
    Thus, for any $u$, $f(u)$ satisfies
    \begin{align*}
        \gamma q_u f(u) \phi_0^2(u) &= \sum_{v \sim u} w_{uv}\phi_0(v)\phi_0(u)[f(u)-f(v)]\\
        \gamma q_u f^2(u) \phi_0^2(u) &= \sum_{v \sim u} w_{uv}\phi_0(v)\phi_0(u)[f(u)-f(v)]f(u).
    \end{align*}
    Let $S \subseteq G$ be the subgraph of $G$ induced by the vertex set $V(S) = \left\{v \vert \phi_1(v) \geq 0 \right\}$ and let $\omega_{uv} = w_{uv}\phi_0(u)\phi_0(v)$. Without loss of generality, we assume that $\sum_{u \in S}q_u \phi_0^2(u) \leq \sum_{u \notin S} q_u \phi_0^2(u)$. (If this is not the case, simply take $f \mapsto -f$.) Then, for any region $S' \subseteq G$ such that either $S' \subseteq S$ or $\overline{S'} \subseteq S$, and define the Cheeger ratio as
    \begin{align*}
        h_{S'} &\equiv \frac{\abs{\partial S'}}{\min\{\vol(S'),\vol(\overline{S'}\}} \\
        &= \begin{cases}\frac{\abs{\partial S'}}{\sum_{u \in V(S')} q_u \phi_0^2(u)} & S' \subseteq S\\
        \frac{\abs{\partial S'}}{\sum_{u \in V(\overline{S'})} q_u \phi_0^2(u)} & \text{$\overline{S'}\subseteq S$}
        \end{cases}\\
        &\geq h \numberthis\label{eqn:local_Cheeger}.
    \end{align*}
    Now, we let
        \begin{align*}
            \gamma\sum_{u\in V(S)}q_uf^2(u)\phi_0^2(u) &= \sum_{u \in V(S)} \sum_{v \sim u}\omega_{uv}[f(u)-f(v)]f(u) \\
            &= \sum_{\{v,u\} \in E(S)}\omega_{uv}[f(u)-f(v)]^2 + \sum_{\substack{\{u,v\} \in \partial S \\ u \in V(S)}}\omega_{uv}[f(u)-f(v)]f(u) \\
            &\geq \sum_{\{v,u\} \in E(S)}\omega_{uv}[f(u)-f(v)]^2 + \sum_{\substack{\{u,v\} \in \partial S \\ u \in V(S)}} \omega_{uv}f^2(u)
        \end{align*}
    since $f(u)f(v)\leq0$ whenever $\{u,v\} \in \partial S$.
    
    Introducing the function
    \[
        g(u) = \begin{cases}
            f(u) & f(u) \geq 0 \\
            0 & \text{otherwise},
        \end{cases}
    \]
    we have that
    \begin{dgroup*}
    \[
        {\gamma \geq \Phi} = \frac{\displaystyle\sum_{\{v,u\}}\omega_{uv}[g(u)-g(v)]^2}{\displaystyle\sum_{u}q_u g^2(u)\phi_0^2(u)}
    \]
    \[    %
        =\frac{\displaystyle\sum_{\{v,u\}}\omega_{uv}[g(u)-g(v)]^2}{\displaystyle\sum_{u\in V(S)}q_uf^2(u)\phi_0^2(u)} \cdot \frac{\displaystyle\sum_{\{v,u\}}\omega_{uv}[g(u)+g(v)]^2}{\displaystyle\sum_{\{v,u\}}\omega_{uv}[g(u)+g(v)]^2}\\
    \]    %
    \[    \geq     \frac{\left(\displaystyle\sum_{\{v,u\}} \omega_{uv}\abs{g^2(u)-g^2(v)}\right)^2}{\left(\displaystyle\sum_{u\in V(S)}q_uf^2(u)\phi_0^2(u)\right) \left(\displaystyle\sum_{\{v,u\}}\omega_{uv}[g(u)+g(v)]^2\right)}\\
    \]
    \[%
        = \frac{\left(\displaystyle\sum_{\{v,u\}} \omega_{uv}\abs{g^2(u)-g^2(v)}\right)^2}{\left(\displaystyle\sum_{u\in V(S)}q_uf^2(u)\phi_0^2(u) \right)\left( \displaystyle 2 \sum_{u \in V(S)}f^2(u)\phi_0(u)\sum_{v \sim u}w_{uv} \phi_0(v) -\sum_{\{v,u\}}\omega_{uv}[g(u)-g(v)]^2\right)}
    \]    %
    \[= \frac{\left(\displaystyle\sum_{\{v,u\}}\omega_{uv} \abs{g^2(u)-g^2(v)}\right)^2}{\left(\displaystyle\sum_{u\in V(S)}q_u g^2(u)\phi_0^2(u) \right)\left( \displaystyle 2 \sum_{u \in V(S)}f^2(u)\phi_0^2(u)q_u\left(W_u+\frac{d_u}{q_u} -\lambda_0 \right) -\sum_{\{v,u\}}\omega_{uv}[g(u)-g(v)]^2\right)}
    \]
    \[= \frac{\left(\displaystyle\sum_{\{v,u\}} \omega_{uv}\abs{g^2(u)-g^2(v)}\right)^2}{\left(\displaystyle\sum_{u \in V(S)}q_u g^2(u)\phi_0^2(u) \right)^2 \left( \frac{\displaystyle 2 \sum_{u \in V(S)}q_u\phi_1^2(u)\left(W_u+\frac{d_u}{q_u} -\lambda_0 \right)}{\displaystyle\sum_{u\in V(S)}q_u\phi_1^2(u)} - \Phi \right)}\\
    \]     %
    \begin{dmath} \label{eqn:final}    \geq \frac{\left(\displaystyle\sum_{\{v,u\}}\omega_{uv} \abs{g^2(u)-g^2(v)}\right)^2}{\left(\displaystyle\sum_{u\in V(S)}q_uf^2(u)\phi_0^2(u) \right)^2 \left( 2 \gamma + 2 Q - \Phi \right)}
    \end{dmath}
    \end{dgroup*}
    where the first inequality follows from Cauchy-Schwarz and the final inequality follows from \cref{cor:potential}. Now, suppose that we label our vertices $u_i$ with integers $i\geq 1$ such that $f(u_{i+1}) \geq f(u_i)$. Then, clearly, for any $j < i$
    \begin{align*}
        g(u_i)-g(u_j) &= \sum_{k=j}^{i-1}(g(u_{k+1}) - g(u_k)).
    \end{align*}
    Now, consider the cut $S_k = \left\{u_j \;\vert\; j \leq k \right\}$,
    \begin{align*}
        \omega_{u_i u_j} \abs*{g^2(u_i)-g^2(u_j)} &= \omega_{u_i u_j}\sum_{k=j}^{i-1} \abs*{g^2(u_{k+1})-g^2(u_k)}\\
        \sum_{j < i}\omega_{u_i u_j} \abs*{g^2(u_i)-g^2(u_j)} &= \sum_{j < i}\sum_{k=j}^{i-1} \omega_{u_i u_j} \abs*{g^2(u_{k+1})-g^2(u_k)}\\
        &=\sum_{k \leq \abs*{V}-1} \abs*{g^2(u_{k+1}) -g^2(u_{k}) }\sum_{j \leq k < i} \omega_{u_i u_j}\\
        &\geq \sum_{k \leq \abs*{V}-1} \abs*{g^2(u_{k+1}) -g^2(u_{k}) }\left(h_{S_k}\sum_{j > k} \phi_0^2(u_j)q_{u_j} \right)\\ 
        &\geq h\sum_{k \leq \abs*{V}} q_{u_k} g^2(u_k) \phi_0^2(u_k) \\&=h\sum_{u\in V(S)} q_u f^2(u) \phi_0^2(u).
    \end{align*}
    Above, both inequalities follow from \cref{eqn:local_Cheeger}, where the second also utilizes summation by parts. Introducing this into \cref{eqn:final},
    \begin{dgroup*}
    \[
        \Phi \geq  \frac{\left(\displaystyle\sum_{\{v,u\}}\omega_{uv}
        \abs*{g^2(u)-g^2(v)}\right)^2}{\left(\displaystyle\sum_{u\in V(S)}q_uf^2(u)\phi_0^2(u) \right)^2 \left( 2 \gamma + 2 Q - \Phi \right)}
    \]
    \[    \geq h^2 \frac{\left(\sum_{u \in V(S)} q_u f^2(u_u) \phi_0^2(u_u) \right)^2}{\left(\sum_{u\in V(S)}q_u f^2(u)\phi_0^2(u) \right)^2 \left( 2 \gamma + 2 Q - \Phi \right)}
    \]
    \[
        =\frac{h^2}{2 \gamma + 2Q-\Phi}.
    \]
    \end{dgroup*}

    Now,  
    \begin{dgroup*}
    \[
        h^2 \leq (2 \gamma + 2 Q -\Phi)\Phi
    \]
    \[
        =2(\gamma +Q)\Phi -\Phi^2
    \]
    \[
        \leq 2 Q \gamma - (\Phi-\gamma)^2 + \gamma^2
    \]
    \[
        \leq 2 Q \gamma + \gamma^2,
    \]
    \end{dgroup*}
    so that $\gamma \geq \sqrt{h^2+Q^2}-Q$.
\end{proof}

\begin{remark}At this point, it is worth pausing to recognize just how much tighter the bound one finds from \cref{thm:cheeger} is than its expansion around $h=0$. Had we simply assumed $h$ was small, we would have arrived at the inequality $\gamma \geq h^2/2 Q - h^4/8Q^3$. For $W=0$, one would expect the inequality $\gamma \geq h^2/(2Q)$, so our result is only slightly weaker than anticipated. At first glance, one might expect this to be our desired bound. Unlike the $W=0$ case, however, we \textit{do not} expect that $h$ will usually be small. In fact, for strongly peaked distributions, we expect that $h$ can be rather large. Thus, retaining the expression of \cref{thm:cheeger} can be essential to using this bound for most choices of $W$.
\end{remark}

The following inequality looks more like the standard Cheeger inequality and does not turn negative, however it is weak for large $h$. It follows immediately from the inequality $2(\sqrt{x+1}-\sqrt{x}) > 1/\sqrt{x+1}$ when $x>0$.
\begin{cor}\label{cor:cheeger2}
    Suppose $\phi_i : V \longrightarrow \mathbb{R}$, satisfy
    \begin{equation}
            \left[\lambda_i -W_u\right]q_u \phi_i(u) = \sum_{v \sim u} w_{uv} \left[\phi_i(u)-\phi_i(v) \right]
    \end{equation}
    and let $\gamma = \lambda_1 - \lambda_0$. Then,
    \[
        \gamma \geq \frac{h^2}{2\sqrt{h^2+Q^2}}
    \]
    where $Q$ is as in \cref{thm:cheeger}.
\end{cor}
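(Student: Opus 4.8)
The plan is to obtain this as an immediate consequence of \cref{thm:cheeger}, which already supplies $\gamma \geq \sqrt{h^2+Q^2}-Q$ under exactly the same hypotheses. It therefore suffices to prove the purely analytic inequality
\[
    \sqrt{h^2+Q^2}-Q \;\geq\; \frac{h^2}{2\sqrt{h^2+Q^2}}
\]
for all $h\geq 0$, $Q\geq 0$, with the convention that the right-hand side is $0$ when $h=0$.

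First I would dispose of the degenerate cases: if $h=0$ both sides vanish, and if $Q=0$ the claim reduces to $h\geq h/2$. So assume $h,Q>0$ and set $x=Q^2/h^2>0$. Writing $\sqrt{h^2+Q^2}=h\sqrt{x+1}$ and $Q=h\sqrt{x}$ and dividing through by $h$, the target inequality becomes exactly
\[
    2\bigl(\sqrt{x+1}-\sqrt{x}\bigr)\;\geq\;\frac{1}{\sqrt{x+1}},
\]
which is the elementary inequality flagged before the statement. To prove it I would rationalize the numerator, $\sqrt{x+1}-\sqrt{x}=\bigl(\sqrt{x+1}+\sqrt{x}\bigr)^{-1}$, so that the inequality is equivalent to $2\sqrt{x+1}\geq\sqrt{x+1}+\sqrt{x}$, i.e. $\sqrt{x+1}\geq\sqrt{x}$, which holds (strictly) for every $x>0$. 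Unwinding the substitution then yields the stated bound, and because the intermediate step is strict one in fact gets $\gamma>h^2/\bigl(2\sqrt{h^2+Q^2}\bigr)$ whenever $Q>0$.

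The \emph{main obstacle} here is bookkeeping rather than mathematics: one must only check that the reduction to \cref{thm:cheeger} is legitimate (the two statements share hypotheses and the same constant $Q$) and that the edge cases $h=0$ and $Q=0$ are treated separately, since the substitution $x=Q^2/h^2$ is unavailable there. No idea beyond \cref{thm:cheeger} is required.
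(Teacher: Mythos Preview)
Your proposal is correct and follows exactly the paper's approach: the paper states that the corollary ``follows immediately from the inequality $2(\sqrt{x+1}-\sqrt{x}) > 1/\sqrt{x+1}$ when $x>0$,'' which is precisely the reduction you carry out with the substitution $x=Q^2/h^2$. Your handling of the degenerate cases $h=0$ and $Q=0$ is a welcome addition the paper omits.
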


We can now adapt \cref{cor:nonstoq} to provide a Cheeger inequality for the case considered in \cref{cor:positive}.
\begin{thm}\label{thm:nonstoq}
For a graph $G=(V,E)$, suppose
    \[
        \gamma = \inf_{\tiny{g \perp q\phi^2}} \frac{\sum_{\{u,v\} \in E(G^+)}\widetilde\omega_{uv}[g(u)-g(v)]^2}{\sum_u q_u g^2(u)\phi^2(u)},
    \]
    then
    \[
        \gamma \geq (Q + \rho) - \sqrt{(Q+\rho)^2 - h^2}
    \]
    where $\rho = \lambda_{\abs{V}-1}-\lambda_0$.
\end{thm}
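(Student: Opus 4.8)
The plan is to mimic the variational argument of \cref{thm:cheeger}, but to track the contribution of the rerouted weights $\widetilde\omega$ more carefully, since after rerouting we can no longer invoke \cref{cor:potential} to replace a $d_u/q_u$ term cleanly; instead we must pay for the full spectral spread $\rho = \lambda_{\abs V - 1} - \lambda_0$. First I would repeat the one-parameter perturbation $f_\epsilon$ from the proof of \cref{thm:cheeger} applied to the Rayleigh quotient in \cref{eqn:rgap} (now over $E(G^+)$ with weights $\widetilde\omega$), obtaining the stationarity condition
\[
    \gamma\, q_u f(u)\phi^2(u) = \sum_{v\sim u} \widetilde\omega_{uv}\,[f(u)-f(v)],
\]
so that $f$ behaves like an eigenfunction of the rerouted operator with eigenvalue $\gamma$ above $\lambda_0$. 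Then, exactly as before, I restrict to $S = \{u : \phi_1(u)\ge 0\}$ (WLOG the smaller-volume side), pass to $g = \max(f,0)$, and run the Cauchy--Schwarz/co-area estimate to get
\[
    \gamma = \Phi \ \geq\ \frac{h^2}{\,2(\gamma + Q') - \Phi\,}
\]
for an appropriate averaged quantity $Q'$; the only place the argument changes is in bounding the cross term $2\sum_{u\in V(S)} f^2(u)\phi(u)\sum_{v\sim u}\widetilde\omega_{uv}\phi(v)/\phi(u)$-type expression that arises when one expands $\sum \widetilde\omega_{uv}[g(u)+g(v)]^2$.

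The key new step is the replacement for \cref{cor:potential}. After rerouting, $\sum_v \widetilde\omega_{uv}$ need not equal $d_u \phi^2(u)$ up to the potential, so the bound $d_u/q_u \le W_u + d_u/q_u - \lambda_0 + \lambda_0 \le \ldots$ must be replaced by the crude but universal estimate $\sum_v \widetilde\omega_{uv}/(q_u\phi^2(u)) \le \lambda_{\abs V-1} - \lambda_0 + Q = \rho + Q$ obtained from the top of the spectrum (the largest eigenvalue of the rerouted operator, relative to $\lambda_0$, governs $\sum_v \widetilde\omega_{uv}$ when tested against $\phi$ appropriately). Feeding $Q' = \rho + Q$ into the displayed inequality gives $h^2 \le (2(\gamma + \rho + Q) - \Phi)\Phi$ with $\Phi = \gamma$, i.e. $h^2 \le 2(\gamma + \rho + Q)\gamma - \gamma^2 = (\rho+Q)^2 - ((\rho+Q)-\gamma)^2$, whence $(\rho+Q-\gamma)^2 \le (\rho+Q)^2 - h^2$ and, taking the branch with $\gamma$ small, $\gamma \ge (Q+\rho) - \sqrt{(Q+\rho)^2 - h^2}$.

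The main obstacle I anticipate is making the ``universal bound'' $\sum_v \widetilde\omega_{uv} \le (\rho + Q)\,q_u \phi^2(u)$ precise and correctly signed: one must check that testing the rerouted operator against the right function really does bound this vertex sum by the spectral spread, and that the $Q$ (maximum degree / normalization constant) enters additively rather than multiplicatively. This requires re-deriving the analogue of \cref{eqn:fact1} for the rerouted weights and confirming that the inequality $\widetilde\omega \ge \omega$ from \cref{cor:positive} points in the useful direction throughout — in particular that replacing $\omega$ by $\widetilde\omega$ in the numerator of $\Phi$ only increases it, while the denominator is unchanged, so that the chain of inequalities is not broken. A secondary subtlety is that $h$ here must be the Cheeger constant computed with the \emph{original} positive weights on $G^+$ (not $\widetilde\omega$), so the co-area step needs the monotonicity $\widetilde\omega_{uv} \ge w_{uv}\phi(u)\phi(v)$ to lower-bound $\sum \widetilde\omega_{uv}|g^2(u)-g^2(v)|$ by $h \sum q_u f^2 \phi^2$; I would double-check that this is exactly what \cref{cor:positive} supplies.
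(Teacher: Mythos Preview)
Your overall plan is the paper's plan: redo the Cauchy--Schwarz/co-area argument of \cref{thm:cheeger} on the rerouted Rayleigh quotient, and replace the appeal to \cref{cor:potential} by a cruder spectral bound. But the bookkeeping in your substitution is off, and an algebra slip masks it.

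In \cref{thm:cheeger} the denominator after Cauchy--Schwarz is controlled by
\[
\frac{2\sum_{u\in S}q_u\phi_1^2(u)\bigl(W_u+\tfrac{d_u}{q_u}-\lambda_0\bigr)}{\sum_{u\in S}q_u\phi_1^2(u)}-\Phi
\ \le\ 2(\gamma+Q)-\Phi,
\]
where the \emph{entire} term $\gamma+Q$ comes from bounding the weighted average of $W_u+\tfrac{d_u}{q_u}-\lambda_0$: \cref{cor:potential} gives the $W_u$-average $\le \lambda_1$, hence the $\gamma$, and $d_u/q_u\le Q$ gives the $Q$. In the rerouted setting the minimizer $f$ of \cref{eqn:rgap} is not $\phi_1/\phi_0$, so \cref{cor:potential} with $\lambda_1$ is unavailable; the paper therefore first uses $\omega_{uv}\le w_{uv}\phi(u)\phi(v)$ on $E(G^+)$ to return to the original weights, applies the ground-state equation, and then bounds $W_u+\tfrac{d_u}{q_u}-\lambda_0$ \emph{pointwise} by $H_{uu}/q_u-\lambda_0\le Q+\rho$. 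This \emph{replaces} $\gamma+Q$ by $Q+\rho$; it does not add $\rho$ on top of $\gamma+Q$. Your ``$Q'=\rho+Q$'' is exactly this pointwise bound, so your displayed inequality should read $\Phi\ge h^2/\bigl(2(Q+\rho)-\Phi\bigr)$, with no extra $\gamma$.

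Your algebra then hides the slip: from $h^2\le\bigl(2(\gamma+\rho+Q)-\Phi\bigr)\Phi$ with $\Phi=\gamma$ you get $h^2\le \gamma^2+2(\rho+Q)\gamma$, which is \emph{not} equal to $(\rho+Q)^2-\bigl((\rho+Q)-\gamma\bigr)^2=2(\rho+Q)\gamma-\gamma^2$. Honestly solving your intermediate inequality yields only $\gamma\ge\sqrt{h^2+(Q+\rho)^2}-(Q+\rho)$, the \cref{thm:cheeger}-shaped bound with $Q\mapsto Q+\rho$, which is strictly weaker than the stated $(Q+\rho)-\sqrt{(Q+\rho)^2-h^2}$. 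Once you drop the spurious $\gamma$, $h^2\le\bigl(2(Q+\rho)-\Phi\bigr)\Phi$ with $\Phi\le\gamma$ gives the claimed bound directly.

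Two smaller points. First, you do not need (and will not get) $\widetilde\omega_{uv}\ge w_{uv}\phi(u)\phi(v)$: the useful direction is the opposite, $\omega_{uv}\le w_{uv}\phi(u)\phi(v)$ on $E(G^+)$, used to enlarge the denominator. Second, the $h$ appearing here is the weighted Cheeger constant of the \emph{rerouted} weights (cf.\ \cref{cor:combinatorial,cor:normalized}), so the co-area step matches without any monotonicity against the original $w\phi\phi$.
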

For a proof, see \cref{ap:proof}. Now, \cref{thm:nonstoq} with the appropriate choice of $Q$ yields the following corollaries: 
\begin{cor}\label{cor:combinatorial}
    Suppose $H=L+W$ is an $n\times n$ real symmetric matrix with eigenvalues $\lambda_0 \leq \lambda_1 \leq \dots \leq \lambda_{n-1}$ and corresponding ground-state $\phi$ where $L$ is the combinatorial Laplacian of $G$. Then, if $G^+$ has degree at most $d_{\max}$
    \[
        \lambda_1-\lambda_0 \geq (d_{\max} + \rho) - \sqrt{(d_{\max}+\rho)^2 - h^2}
    \]
    for $\rho = \lambda_{N-1}-\lambda_0$ and 
    \[h = \sup_{\substack{\alpha>0 \\ \sum_{p \in P(u,v)} \alpha_p = 1}} h_\alpha,\]
    \[
        h_\alpha = \min_S \max_{S' \in \{S, \overline S\}}\frac{\sum_{\{u,v\} \in \partial S} \omega_{uv}(\alpha)}{\sum_{u \in S'} \phi^2(u)}
    \]
    where
    \[
    \omega_{uv}(\alpha) = \left(w_{uv}\phi(u)\phi(v)-\sum_{w_{xy}<0}\sum_{\tiny{\substack{p \in P(x,y)\\\{u,v\} \in p}}}\abs*{w_{xy}}\ell_p\alpha_p\phi(x)\phi(y) \right)
    \]
    as in \cref{thm:positive}.
\end{cor}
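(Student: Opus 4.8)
The plan is to derive \cref{cor:combinatorial} as a direct specialization of \cref{thm:nonstoq}, combined with the routing reduction of \cref{cor:positive} and two elementary monotonicity observations; no new estimate is needed.

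First I would fix any admissible routing $\alpha$, i.e.\ one satisfying conditions 1 and 2 of \cref{cor:positive} (if none exists the statement is vacuous). \Cref{cor:positive} then produces a weight function $\widetilde\omega \geq \omega(\alpha)$ supported on $E(G^+)$ with
\[
    \gamma = \inf_{g \perp q\phi^2} \frac{\sum_{\{u,v\} \in E(G^+)}\widetilde\omega_{uv}[g(u)-g(v)]^2}{\sum_u q_u g^2(u)\phi^2(u)},
\]
which is exactly the hypothesis of \cref{thm:nonstoq}. Applying that theorem, specialized to the combinatorial Laplacian so that $q_u = 1$, gives
\[
    \gamma \geq (Q+\rho) - \sqrt{(Q+\rho)^2 - \widetilde h^2},
\]
where $\widetilde h$ is the weighted Cheeger constant of $G^+$ under $\widetilde\omega$ and $Q = \bigl(\sum_{u\in S} d_u \phi_1^2(u)\bigr)/\bigl(\sum_{u\in S}\phi_1^2(u)\bigr)$ is the $\phi_1^2$-averaged degree from \cref{thm:cheeger}.

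Next I would pass both parameters through monotonicity. For fixed $Q$ the map $t \mapsto (Q+\rho) - \sqrt{(Q+\rho)^2 - t^2}$ is nondecreasing on $[0,Q+\rho]$, and for fixed $t$ the map $s \mapsto (s+\rho) - \sqrt{(s+\rho)^2 - t^2}$ is nonincreasing in $s$. Since $\widetilde\omega \geq \omega(\alpha)$ pointwise, every cut $S$ has boundary $\sum_{\{u,v\}\in\partial S}\widetilde\omega_{uv} \geq \sum_{\{u,v\}\in\partial S}\omega_{uv}(\alpha)$ while the volumes $\sum_{u\in S'}\phi^2(u)$ are unchanged, whence $\widetilde h \geq h_\alpha$; and $Q$, being a $\phi_1^2$-weighted average of vertex degrees of $G^+$, is at most $d_{\max}$. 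Combining the two monotonicities gives $\gamma \geq (d_{\max}+\rho) - \sqrt{(d_{\max}+\rho)^2 - h_\alpha^2}$ for every admissible $\alpha$, and since the right-hand side is nondecreasing in $h_\alpha$ I would finish by taking the supremum over $\alpha$, which replaces $h_\alpha$ with $h = \sup_\alpha h_\alpha$ and yields the stated inequality.

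The main obstacle is the degree bookkeeping buried in $Q$: after routing, the graph feeding \cref{thm:nonstoq} is $G^+$ carrying the modified weights, and one must confirm that the degrees entering $Q$ — which in the proof of \cref{thm:cheeger} enter through the coefficient $d_u/q_u$ in \cref{eqn:potential_a1} — are still controlled by the advertised $d_{\max}$ of $G^+$ rather than by some inflated degree created by piling negative weight onto positive edges. Settling precisely which weight function $d_{\max}$ refers to, and verifying the range condition $h_\alpha \leq d_{\max}+\rho$ needed for the square root to be real (which should follow from the trivial bound $\gamma \leq \rho$ together with the inequality itself), are the only points that need genuine care; the rest is substitution plus the two monotonicity facts above.
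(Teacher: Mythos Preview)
Your approach is essentially the paper's: the paper offers no separate proof of \cref{cor:combinatorial} beyond the sentence ``\cref{thm:nonstoq} with the appropriate choice of $Q$ yields the following corollaries,'' and what you have written is exactly that specialization spelled out, together with the supremum over routings coming from \cref{cor:positive} and the two monotonicities in $h$ and $Q$. Regarding the degree bookkeeping you flag: in the proof of \cref{thm:nonstoq} the quantity $d_u$ enters through the eigenvalue equation for the original ground state $\phi$ of $H$, so $d_u=\sum_v w_{uv}$ with the signed weights of $G$; since negative terms only decrease this sum, $d_u\le \sum_{v:\,w_{uv}>0} w_{uv}\le d_{\max}(G^+)$, and hence $Q\le d_{\max}$ as required---the routed weights $\widetilde\omega$ never enter $Q$.
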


\begin{cor}\label{cor:normalized}
    Suppose $H=\mathcal{L}+W$ is a real symmetric matrix with eigenvalues $\lambda_0 < \lambda_1 \leq \dots \leq \lambda_{N-1}$ and corresponding ground-state $\phi$ where $\mathcal{L}$ is the normalized Laplacian of $G$. Then,
    \[
        \lambda_1 -\lambda_0 \geq (1 + \rho) - \sqrt{(1+\rho)^2 - h^2}
    \]
    for $\rho = \lambda_{N-1}-\lambda_0$ and the distributed Cheeger constant 
    \[h = \sup_{\substack{\alpha>0 \\ \sum_p \alpha_p = 1}} h_\alpha,\]
    \[
        h_\alpha = \min_S \max_{S' \in \{S, \overline S\}}\frac{\sum_{\{u,v\} \in \partial S} \omega_{uv}(\alpha)}{\sum_{u \in S'} d_u\phi^2(u)}
    \]
    where
    \[
    \omega_{uv}(\alpha) = \left(\omega_{uv}\phi(u)\phi(v)-\sum_{\omega_{xy}<0}\sum_{\tiny{\substack{p \in P(x,y)\\\{u,v\} \in p}}}\abs{\omega_{xy}}\ell_p\alpha_p \phi(x)\phi(y) \right)
    \]
    as in \cref{thm:positive}.
\end{cor}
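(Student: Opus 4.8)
The plan is to obtain \cref{cor:normalized} as the specialization of \cref{thm:nonstoq} to the normalized Laplacian ($q_u = d_u$), fed by the negative-edge routing of \cref{cor:positive}, followed by an optimization over the routing weights. Since $H = \mathcal{L} + W$ is real symmetric, I first carry out the reductions of the preliminary sections: rotate by the diagonal unitary so that the ground state $\phi = \phi_0 \ge 0$. Then, for each admissible family $\alpha$ --- meaning $\alpha_p \in [0,1]$ with $\sum_{p \in P(u,v)}\alpha_p = 1$ for every $\{u,v\}$, with the redistributed weights $\omega_{uv}(\alpha)$ positive on $E(G^+)$ and $G^+$ connected --- \cref{cor:positive} rewrites the characterization of $\gamma$ from \cref{prop:gap} as
\[
  \gamma = \inf_{g \perp q\phi^2} \frac{\sum_{\{u,v\} \in E(G^+)} \widetilde\omega_{uv}\,[g(u)-g(v)]^2}{\sum_u q_u g^2(u)\phi^2(u)}
\]
for some $\widetilde\omega \ge \omega(\alpha)$ supported on $E(G^+)$. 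This is precisely the hypothesis of \cref{thm:nonstoq}.

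Next I would identify the constant $Q$. For the normalized Laplacian $q_u = d_u$, so the quantity $Q = \bigl(\sum_{u \in S} d_u \phi_1^2(u)\bigr)\big/\bigl(\sum_{u \in S} q_u \phi_1^2(u)\bigr)$ appearing in \cref{thm:cheeger} (and inherited by \cref{thm:nonstoq}) equals $1$ identically, for every $S$ and every $\phi_1$. Plugging $Q = 1$ into \cref{thm:nonstoq} gives
\[
  \gamma \ge (1+\rho) - \sqrt{(1+\rho)^2 - \widetilde h^2}, \qquad \rho = \lambda_{N-1} - \lambda_0,
\]
where $\widetilde h$ is the weighted Cheeger constant built from the weights $\widetilde\omega$ with volume measure $\sum_{u \in S'} d_u \phi^2(u)$.

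The last step trades the inaccessible $\widetilde h$ for the explicit $h_\alpha$ and then optimizes. Because $\widetilde\omega \ge \omega(\alpha)$ pointwise on $E(G^+)$, while the volume $\sum_{u \in S'} d_u \phi^2(u)$ does not involve $\widetilde\omega$, every cut $S$ satisfies $\sum_{\{u,v\} \in \partial S}\widetilde\omega_{uv} \ge \sum_{\{u,v\} \in \partial S}\omega_{uv}(\alpha)$, hence $\widetilde h \ge h_\alpha$. Since $x \mapsto (1+\rho) - \sqrt{(1+\rho)^2 - x^2}$ is nondecreasing on $[0, 1+\rho]$ (its derivative $x/\sqrt{(1+\rho)^2-x^2}$ is nonnegative there), and since $h_\alpha \le \widetilde h \le 1+\rho$ --- the bound $\widetilde h \le 1+\rho$ being exactly the nonnegativity of the radicand established in the proof of \cref{thm:nonstoq} --- we may weaken it to $\gamma \ge (1+\rho) - \sqrt{(1+\rho)^2 - h_\alpha^2}$. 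This holds for every admissible $\alpha$ while the left-hand side is independent of $\alpha$, so taking the supremum and writing $h = \sup_\alpha h_\alpha$ yields the claim.

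The serious analytic work is all upstream, in \cref{thm:cheeger} and \cref{thm:nonstoq} (the latter proved in \cref{ap:proof}); what must be handled carefully here is the interplay between the routing optimization and the $\min_S\max_{S'}$ inside $h_\alpha$. The key point is that one does \emph{not} need a single family $\alpha$ that is simultaneously optimal for all cuts: the inequality is valid for each fixed $\alpha$, and only afterwards does one pass to the supremum, which is what legitimizes the nested $\min$--$\max$ in the definition of $h_\alpha$. The remaining items --- confirming $Q = 1$, checking that the radicand stays nonnegative, and verifying that the hypotheses of \cref{cor:positive} (connectivity of $G^+$, positivity of $\omega(\alpha)$, the latter underpinning $\lambda_0 < \lambda_1$ via Perron--Frobenius) are exactly those stipulated in \cref{cor:normalized} --- are routine bookkeeping.
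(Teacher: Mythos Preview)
Your proposal is correct and follows essentially the same route as the paper: the paper derives \cref{cor:normalized} directly from \cref{thm:nonstoq} by taking $q_u = d_u$ so that $Q = 1$, with the routing input supplied by \cref{cor:positive}. You have simply made explicit the bookkeeping (monotonicity in $h$, the $\widetilde\omega \ge \omega(\alpha)$ comparison, and the final supremum over $\alpha$) that the paper leaves implicit in the one-line remark ``\cref{thm:nonstoq} with the appropriate choice of $Q$ yields the following corollaries.''
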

The form of \cref{cor:combinatorial} and \cref{cor:normalized} is not as elegant as \cref{thm:cheeger}, but we shouldn't be turned off so easily; each corollary has a pleasing interpretation. Begin by taking negative edge weights and redistribute them along positive paths as best you can. The Cheeger constant of the resulting graph is always a lower bound for the gap.

\section{Applications}\label{sec:applications}
\subsection{Some simple reductions}
The approach of \cref{sec:neg_edges} is more general than one might desire. All we have effectively done in that section is apply Jensen's inequality. Restricting to unique paths, we have the following corollary to \cref{thm:positive}.
\begin{cor}\label{cor:simpler}
    Suppose that for the graph $G=(V,E)$, $\gamma(G)$ is the spectral gap of the combinatorial Laplacian of $G$. Then, if $G^+$ is connected and there exists a set of non-overlapping paths such that
    \begin{equation*}
        \mathcal{P} = \{P(u,v) \in E^+ \;\vert\; \{u,v\} \in E^- \text{ and } w(e \in P(u,v)) - \abs{w_{uv}}\abs{P(u,v)} \geq 0  \}.
    \end{equation*}
    Then, $\gamma(G) \geq \gamma(G\setminus \mathcal{P})$ where all constants are as in \cref{sec:cheeger}.
\end{cor}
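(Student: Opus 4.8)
The plan is to obtain \cref{cor:simpler} as the single-path, edge-disjoint specialization of \cref{thm:positive} (equivalently \cref{cor:positive}), followed by an elementary min--max comparison of Laplacians. First, for each negative edge $\{x,y\}\in E^-$ the hypothesis supplies a path $P(x,y)\subseteq E^+$; I would take the routing weights $\alpha_{P(x,y)}=1$ and $\alpha_p=0$ for every other $p\in P(x,y)$, so that condition~1 of \cref{thm:positive}, $\sum_{p\in P(u,v)}\alpha_p=1$, holds trivially. With this choice the redistributed weight of a positive edge $e=\{u,v\}$ is
\[
    \omega_e \;=\; w_e \;-\; \sum_{\{x,y\}\in E^-\,:\,e\in P(x,y)} \abs{w_{xy}}\,\abs{P(x,y)},
\]
and since the paths in $\mathcal P$ are non-overlapping this sum has at most one term; hence $\omega_e = w_e - \abs{w_{xy}}\,\abs{P(x,y)}\ge 0$ when $e$ lies on the routing path of $\{x,y\}$, and $\omega_e=w_e>0$ otherwise. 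This is exactly the statement that $G\setminus\mathcal P$, i.e.\ the graph obtained from $G$ by deleting $E^-$ and replacing each remaining edge weight by $\omega_e$, is an ordinary nonnegatively weighted graph, and it is to $G\setminus\mathcal P$ that the constants of \cref{sec:cheeger} refer.

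Next I would record the single analytic estimate behind \cref{thm:positive}: for any $f:V\to\mathbb R$ and any $\{x,y\}\in E^-$ with $P(x,y)=(x=v_0,v_1,\dots,v_\ell=y)$, telescoping $f(x)-f(y)=\sum_{i}\bigl(f(v_i)-f(v_{i+1})\bigr)$ and Cauchy--Schwarz give $[f(x)-f(y)]^2\le \ell\sum_{i}[f(v_i)-f(v_{i+1})]^2$, hence $w_{xy}[f(x)-f(y)]^2 \ge -\abs{w_{xy}}\,\ell\sum_{i}[f(v_i)-f(v_{i+1})]^2$. Summing over all $\{x,y\}\in E^-$ and using edge-disjointness to collect, for each positive edge, the (at most one) contribution it absorbs, yields the pointwise quadratic-form inequality
\[
    \sum_{\{u,v\}\in E} w_{uv}[f(u)-f(v)]^2 \;\ge\; \sum_{\{u,v\}\in E\setminus E^-} \omega_{uv}[f(u)-f(v)]^2 \qquad\text{for every }f,
\]
which is precisely the inequality established inside the proof of \cref{thm:positive} specialized to $S=G$, $q\equiv1$, empty boundary; so this step can be cited rather than reproven.

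Finally I would convert this into the gap comparison. Because the corollary concerns the combinatorial Laplacian ($W=0$), the Rayleigh denominator $\sum_u f^2(u)$ is identical on both sides, so the displayed inequality reads $L_G\succeq L_{G\setminus\mathcal P}$ in the Loewner order, and Courant--Fischer gives $\lambda_i(L_G)\ge\lambda_i(L_{G\setminus\mathcal P})$ for every $i$. Both operators are positive semidefinite and annihilate the all-ones vector, so $\lambda_0(L_G)=\lambda_0(L_{G\setminus\mathcal P})=0$; subtracting, $\gamma(G)=\lambda_1(L_G)\ge\lambda_1(L_{G\setminus\mathcal P})=\gamma(G\setminus\mathcal P)$. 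The argument is short, and the only delicate points are bookkeeping ones: non-overlapping is used exactly once, to force the single-term sum and hence $\omega\ge0$; and one may pass from the eigenvalue comparison to a comparison of \emph{gaps} only because the two ground eigenvalues coincide, which relies on $W=0$ together with $\omega\ge0$. I expect this last point to be the main thing to get right; note also that $\omega_e=0$ is permitted, so $G\setminus\mathcal P$ may be disconnected and the Perron--Frobenius uniqueness of \cref{thm:positive} need not transfer, but this is harmless since $\gamma(G)\ge\gamma(G\setminus\mathcal P)\ge0$ holds in any case.
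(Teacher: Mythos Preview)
Your proposal is correct and is essentially the approach the paper has in mind: the paper presents \cref{cor:simpler} as the ``restricting to unique paths'' specialization of \cref{thm:positive} without further argument, and in the very next paragraph supplies the missing gap comparison by noting $\lambda_0(G)=\lambda_0(G\setminus\mathcal P)=0$ so that it suffices to bound $\lambda_1(G)\ge\lambda_1(G\setminus\mathcal P)$. Your write-up is in fact more careful than the paper's on two points: you make explicit that edge-disjointness forces the redistribution sum to have at most one term (hence $\omega_e\ge0$), and you observe that positive semidefiniteness of $L_G$---needed to conclude $\lambda_0(L_G)=0$ rather than merely $\lambda_0(L_G)\le0$---is itself a consequence of the quadratic-form inequality $L_G\succeq L_{G\setminus\mathcal P}\succeq0$ just established.
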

Comparison theorems like this are rather easy to derive by choosing the appropriate set of paths through $G$. One can also use this to derive a Cheeger inequality that uses the Cheeger constant $h$ of $G^+$ on both sides. Note that, we obtain a tighter bound than that of \cref{thm:nonstoq}, since $\lambda_0(G) = \lambda_0(G^+) = 0$, so we only need to bound $\lambda_1(G) \geq \lambda_1(G\setminus \mathcal{P})$ and we can apply \cref{thm:cheeger} directly. 
\begin{thm}
    Suppose that the graph $G=(V,E)$, $\gamma(G)$ is spectral gap of the combinatorial Laplacian of $G$. Then, if $G^+$ is connected and there exists a set of non-overlapping paths such that
    \begin{equation*}
        \mathcal{P} = \{P(u,v) \in E^+ \;\vert\; \{u,v\} \in E^- \text{ and } w(e \in P(u,v)) - \abs{w_{uv}}\abs{P(u,v)} \geq \epsilon  \}.
    \end{equation*}
    Then, 
    \begin{equation*}
        2h \geq \gamma \geq \epsilon(\sqrt{h^2+Q^2}-Q)
    \end{equation*}
    where all constants are as in \cref{sec:cheeger}.
\end{thm}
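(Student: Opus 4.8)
The plan is to assemble the statement from ingredients already in place: \cref{thm:upper} supplies the upper bound, and the lower bound comes from the rerouting reduction of \cref{thm:positive} in the unique‑path form of \cref{cor:simpler}, followed by \cref{thm:cheeger} applied to $G^+$, with the slack $\epsilon$ doing exactly the work of trading the Cheeger constant of the rerouted graph for that of $G^+$. For the upper bound, observe that here $W=0$ and $L$ is the combinatorial Laplacian of $G$, so the ground state $\phi$ is constant and the weighted Cheeger constant of $H=L$ from \cref{eqn:Cheeger_constant} is $\min_{S}\bigl(\sum_{\{u,v\}\in\partial S}w_{uv}\bigr)/\min(\abs{S},\abs{\overline S})$. \cref{thm:upper} bounds $\gamma(G)$ by twice this; since $\sum_{\{u,v\}\in\partial S}w_{uv}\le\sum_{\{u,v\}\in\partial S\cap E^+}w_{uv}$ for every cut, this constant is at most $h$, the Cheeger constant of $G^+$, giving $\gamma(G)\le 2h$.

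For the lower bound I would first form the rerouted graph $G'=G\setminus\mathcal P=(V,E^+,w')$, where $w'_e=w_e$ if $e$ lies on no path of $\mathcal P$ and $w'_e=w_e-\abs{w_{xy}}\,\abs{P(x,y)}$ if $e$ lies on $P(x,y)$. Non‑overlap makes $w'$ well defined, and the condition defining $\mathcal P$ gives $w'_e\ge\epsilon>0$ on every path edge, so $w'>0$ throughout $E^+$. Running Cauchy--Schwarz edgewise along each path exactly as in the proof of \cref{thm:positive} with $\alpha_p=1$ shows $\langle f,L_G f\rangle=\sum_{E(G)}w_{uv}[f(u)-f(v)]^2\ge\sum_{E^+}w'_{uv}[f(u)-f(v)]^2=\langle f,L_{G'}f\rangle$ for all $f$, i.e.\ $L_G\succeq L_{G'}\succeq 0$; hence $\lambda_i(G)\ge\lambda_i(G')$ by min--max, and since $G^+$ is connected both kernels are spanned by the constant function, so $\lambda_0(G)=\lambda_0(G')=0$ and $\gamma(G)\ge\gamma(G')$.

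Next I would extract $\epsilon$. Normalizing $w$ so that $\max_{e\in E^+}w_e\le 1$ — the running convention of \cref{sec:applications}, where positive edges carry unit weight, and under which both displayed inequalities are the scale‑normalized statements — one gets $w'_e\ge\epsilon\ge\epsilon w_e$ on path edges and $w'_e=w_e\ge\epsilon w_e$ elsewhere, hence $w'\ge\epsilon w^+$ entrywise, so $R(L_{G'},g)\ge\epsilon\,R(L_{G^+},g)$ for every $g$ and (both ground states being constant) $\gamma(G')\ge\epsilon\,\gamma(G^+)$. Finally \cref{thm:cheeger} applied to $G^+$, where $W=0$, gives $\gamma(G^+)\ge\sqrt{h^2+Q^2}-Q$ with $Q$ the degree parameter of \cref{thm:cheeger} (which may be replaced by $d_{\max}(G^+)$, only weakening the bound, since $t\mapsto\sqrt{t^2+Q^2}-Q$ decreases in $Q$ and increases in $h$). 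Chaining the three estimates yields $\gamma(G)\ge\gamma(G')\ge\epsilon(\sqrt{h^2+Q^2}-Q)$.

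The routine parts are the edgewise Cauchy--Schwarz and the monotonicity of $t\mapsto\sqrt{t^2+Q^2}-Q$. The step that needs genuine care — and the one a referee would press on — is pinning down \emph{which} graph the constants $h$ and $Q$ in the conclusion refer to: the argument only closes if they are the Cheeger constant and degree parameter of $G^+$ (not of $G'$ or of $G$), and this in turn forces the normalization $\max_{e\in E^+}w_e\le 1$, without which the bare constant $\epsilon$ multiplying $\sqrt{h^2+Q^2}-Q$ is inconsistent with the scaling of $\gamma$. Once that convention is fixed, the proof is an immediate assembly of \cref{thm:upper}, \cref{thm:positive}/\cref{cor:simpler}, and \cref{thm:cheeger}.
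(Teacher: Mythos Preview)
Your argument is correct and uses the same ingredients as the paper --- rerouting via \cref{cor:simpler}/\cref{thm:positive}, the pointwise comparison $w'\ge\epsilon\,w^+$, and \cref{thm:cheeger} for the lower bound, with \cref{thm:upper} (plus the observation that dropping negative boundary edges only raises the cut weight) for the upper bound. The one substantive difference is where the factor $\epsilon$ is extracted: the paper applies \cref{thm:cheeger} to the routed graph and then argues at the level of Cheeger constants that $k\ge\epsilon h$ (and degrees $Q'\ge\epsilon Q$), whereas you compare Laplacians first, $L_{G'}\succeq\epsilon L_{G^+}\Rightarrow\gamma(G')\ge\epsilon\,\gamma(G^+)$, and only then invoke \cref{thm:cheeger} on $G^+$. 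Your ordering is the cleaner of the two, since it avoids having to control how $\sqrt{k^2+Q'^2}-Q'$ moves under separate lower bounds on $k$ and $Q'$ (that function is decreasing in $Q'$, so the paper's $Q'\ge\epsilon Q$ by itself points the wrong way). You are also right to make the normalization $\max_{e\in E^+}w_e\le 1$ explicit; without it the inequality $w'_e\ge\epsilon\ge\epsilon w_e$ on path edges fails and the bare constant $\epsilon$ is dimensionally inconsistent with $\gamma$ --- the paper's proof uses this tacitly in the displayed line $k\ge\epsilon h$.
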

\begin{proof}
    This follows readily from \cref{thm:cheeger}. First, it is obvious that the degree $Q'$ resulting from routing negative edge weights must satisfy $Q'\geq \epsilon Q$. Thus, one must only show that $k$, the weighted Cheeger constant of $G^+$ after routing, satisfies $k \geq \epsilon h$. If we let $\omega$ be the edge-weights after appropriately routing the original weight function $w$,
    \begin{equation*}
        k = \frac{\sum_{\substack{u \in S \\ v \notin S}}\omega_{uv}}{\sum_{u \in V(S)} q_u} \geq \epsilon\frac{\sum_{\substack{u \in S \\ v \notin S}}w_{uv}}{\sum_{u \in V(S)} q_u} = \epsilon h.
    \end{equation*}
\end{proof}

\subsection{Cheeger comparison theorems}
To obtain a somewhat useful comparison theorem, we require the following characterization of the weighted Cheeger constant $h$, which derives from a lengthy calculation beyond the scope of this paper. For a derivation that generalizes easily, see \cite{Chung2000}. Specifically, 
\begin{equation}\label{eqn:functional_h}
    h = \inf_{f\not\equiv 0}\sup_C \frac{\sum_{\{u,v\} \in E(G)} w_{uv}\phi_0(u)\phi_0(v)\abs{f(u)-f(v)}}{\sum_u q_u \phi_0^2(u) \abs{f(u)-C}}
\end{equation}
where $\phi_0$ is the ground-state of the corresponding Hamiltonian (Laplacian). Note that when $H$ is just a Laplacian, or $W=0$, $h$ is the standard Cheeger constant of the corresponding graph. With this, we can prove the following theorem:
\begin{thm}\label{thm:comparison}
    Suppose that $g$ is the Cheeger constant of $L_q$ corresponding to $G=(V,E)$ with weight function $w:V\times V \longrightarrow \mathbb{R}$. Further, suppose $h$ is the weighted Cheeger constant of $H=L_q+W$ resulting from imposing the Dirichlet condition as in \cref{sec:Dirichlet}. Then, if $H$ has ground-state $\phi$ satisfying the curvature inequality,
    \[
        \sum_{v\sim u} w_{uv}\abs{\phi(u)-\phi(v)} \leq \frac{\epsilon}{2} d_u \phi(u)
    \]
    the Cheeger constants $h$ and $g$ satisfy
    \[
        g \leq  h + \lambda_0(H) + \epsilon Q
    \]
    where $Q$ is the maximum degree of $G$ if $L_q$ is the combinatorial Laplacian and $1$ if $L_q$ is the normalized Laplacian.
\end{thm}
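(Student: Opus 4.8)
The plan is to reverse the strategy of \Cref{thm:upper}: instead of feeding a cut of $G$ into the gap formula, I feed a near-optimal test function for $h$ into the functional characterization of the \emph{unweighted} constant $g$ and track the error. By \eqref{eqn:functional_h} applied to $H$, for every $\delta>0$ there is an $f:V\longrightarrow\mathbb{R}$, which after a harmless shift by a constant realizes the inner $\sup_C$ at $C=0$, with
\[
    \sum_{\{u,v\}\in E}w_{uv}\phi(u)\phi(v)\abs{f(u)-f(v)} \le (h+\delta)\sum_u q_u\phi^2(u)\abs{f(u)}.
\]
Reading \eqref{eqn:functional_h} with $W=0$ — so with the constant ground state of $L_q$ — gives exactly the functional form of $g$ (as the remark just above \Cref{thm:comparison} notes, $h$ reduces to the ordinary Cheeger constant when $W=0$), so inserting this same $f$ there gives
\[
    g \le \frac{\sum_{\{u,v\}\in E}w_{uv}\abs{f(u)-f(v)}}{\sum_u q_u\abs{f(u)-C}}
\]
for a suitable $C$, and everything reduces to comparing these two Rayleigh-type quotients for the single function $f$.

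First I would normalize $\phi$ so that $\max_u\phi(u)=1$ — legitimate since $g$, $h$, $\lambda_0(H)$ and the curvature hypothesis are all invariant under $\phi\mapsto c\phi$ — so that $0<\phi\le1$ and hence $\sum_u q_u\phi^2(u)\abs{f(u)}\le\sum_u q_u\abs{f(u)}$. Then I would split each edge weight and each vertex weight as $w_{uv}=w_{uv}\phi(u)\phi(v)+w_{uv}(1-\phi(u)\phi(v))$ and $q_u=q_u\phi^2(u)+q_u(1-\phi^2(u))$, using
\[
    1-\phi(u)\phi(v)=\tfrac12\bigl[(1-\phi^2(u))+(1-\phi^2(v))\bigr]+\tfrac12\bigl(\phi(u)-\phi(v)\bigr)^2 .
\]
The $(1-\phi^2)$ pieces arising from the numerator and from the denominator are arranged to cancel — this is precisely what makes $g=h$ when $\phi$ is constant — so what survives is governed by the genuine variation of $\phi$.

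That variation is controlled by the two remaining hypotheses. The curvature inequality $\sum_{v\sim u}w_{uv}\abs{\phi(u)-\phi(v)}\le\tfrac{\epsilon}{2}d_u\phi(u)$ bounds $\sum_{v\sim u}w_{uv}(\phi(u)-\phi(v))^2\le\tfrac{\epsilon}{2}d_u\phi(u)$ (using $\abs{\phi(u)-\phi(v)}\le1$), which, summed against $\abs{f}$ and combined with $d_u\le Qq_u$ and $\phi\le1$, produces the $\epsilon Q$ term. The eigenvalue equation $(\lambda_0-W_u)q_u\phi(u)=\sum_{v\sim u}w_{uv}(\phi(u)-\phi(v))$ converts the surviving cross terms $\sum_{v\sim u}w_{uv}\phi(u)(\phi(u)-\phi(v))$ into $(\lambda_0-W_u)q_u\phi^2(u)$; bounding $\lambda_0-W_u\le\lambda_0$ (the boundary weights give $W_u\ge0$ in the Dirichlet embedding, and $\lambda_0\ge0$) and using $\phi\le1$ again gives the $\lambda_0(H)$ term. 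Letting $\delta\to0$ yields $g\le h+\lambda_0(H)+\epsilon Q$, and the normalized-Laplacian case is identical with $Q=1$.

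The step I expect to be the real obstacle is the cancellation of the $(1-\phi^2)$ contributions: these quantities are not individually small, and a crude term-by-term estimate of the difference of numerators breaks down exactly in the bottleneck regime — when $\phi$ is small on the support of $f$ — which is where the theorem has content. So the comparison has to be done at the level of the full quotient: concretely I would order the vertices $u_1,\dots,u_n$ along $f$, telescope each edge sum over the nested cuts $S_k=\{u_1,\dots,u_k\}$, and then match the numerator and denominator cut-sums so that the $(1-\phi^2)$ terms combine before anything is bounded. The bookkeeping also has to reconcile the $q$-weighted median used for $g$ with the $q\phi^2$-weighted median implicit in \eqref{eqn:functional_h}, and closing that bookkeeping is where the work lies.
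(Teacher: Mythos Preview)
Your approach has a genuine gap precisely where you flag it, and I do not think it closes. With a general near-optimizer $f$ of the $h$-functional, the $(1-\phi^2)$ piece in the numerator carries the edge-difference weight $|f(u)-f(v)|$, while the $(1-\phi^2)$ piece in the denominator carries the vertex weight $|f(u)-C|$. There is no mechanism forcing these to match: for an indicator-like $f$ (which is what the infimum in \eqref{eqn:functional_h} selects), the numerator contribution lives on the boundary $\partial S$ while the denominator contribution lives on one side of $S$, and these bear no fixed relation to one another. Telescoping over the nested cuts $S_k$ does not repair this: after telescoping, the $k$-th numerator coefficient is $\sum_{i>k,\,j\le k}w_{u_iu_j}\tfrac12[(1-\phi^2(u_i))+(1-\phi^2(u_j))]$ and the $k$-th denominator coefficient is $\sum_{j\le k}q_{u_j}(1-\phi^2(u_j))$; their ratio is a $(1-\phi^2)$-weighted Cheeger ratio of $S_k$, which is not controlled by $g$, $h$, $\lambda_0$, or the curvature hypothesis. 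The eigenvalue identity you invoke for the $\lambda_0(H)$ term likewise does not appear in the right form, since your surviving terms carry $|f(u)-f(v)|$ rather than $f^2(u)$ or $|f(u)|$.

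The paper avoids the whole difficulty by choosing a very specific test function in the $g$-functional rather than a generic near-optimizer of $h$. Let $S$ be the cut achieving $h$ and set $f(u)-C=\phi^2(u)$ on $S$ and $-\phi^2(u)$ off $S$. Then the $g$-denominator is exactly $\sum_u q_u\phi^2(u)$, so there is no $(1-\phi^2)$ denominator term at all. The numerator splits as $\sum_{\partial S}w_{uv}(\phi^2(u)+\phi^2(v))+\sum_{\{u,v\}\notin\partial S}w_{uv}|\phi^2(u)-\phi^2(v)|$; writing $\phi^2(u)+\phi^2(v)=(\phi(u)-\phi(v))^2+2\phi(u)\phi(v)$, the $2\phi(u)\phi(v)$ sum over $\partial S$ produces the $h$ term directly, the $(\phi(u)-\phi(v))^2$ terms over all of $E(G)\cup\partial G$ give exactly $\lambda_0(H)$ via the Dirichlet Rayleigh quotient, and the residual $|\phi^2(u)-\phi^2(v)|-(\phi(u)-\phi(v))^2=2\min\{\phi(u),\phi(v)\}\,|\phi(u)-\phi(v)|$ is what the curvature hypothesis bounds by $\epsilon Q$. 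The point is that building $\phi$ into the test function, rather than trying to factor it out afterward, is what makes the comparison close.
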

\begin{proof}
    Note that $g$ corresponds to a case where $\phi_0(u \in V(G)) = 1$ in \cref{eqn:functional_h}. Thus, 
    \[
        g = \inf_{f\not\equiv 0}\sup_C \frac{\sum_{\{u,v\} \in E(G)\cup \partial G} w_{uv}\abs{f(u)-f(v)}}{\sum_u q_u  \abs{f(u)-C}}.
    \]
    Now, let $S \subseteq G$ be the subset of $G$ that achieves $h$. We introduce
    \[  
        f(u)-C = \begin{cases}
            \phi^2(u) & u \in S \\
            - \phi^2(u) & u \notin S,
        \end{cases}
    \]
    where $\phi$ is the ground-state of $H$. Now,
    \begin{dgroup*}
    \[
        g \leq \frac{\displaystyle \sum_{\{u,v\} \in \partial S} w_{uv} \left(\phi^2(u) +\phi^2(v)\right) + \sum_{\{u,v\} \notin \partial S} w_{uv}\abs{\phi^2(u)-\phi^2(v)}}{\sum_u q_u \phi^2(u) }
    \]
    \[
        = \frac{\displaystyle \sum_{\{u,v\} \in \partial S} w_{uv} \left[\left(\phi(u) -\phi(v)\right)^2+2\phi(u)\phi(v) \right]+ \sum_{\{u,v\} \notin \partial S} w_{uv}\abs{\phi^2(u)-\phi^2(v)}}{\sum_u q_u \phi^2(u) }
    \]
    \[
        \leq 2h\frac{\sum_{u\in V(S)}q_u\phi^2(u) }{\sum_u q_u \phi^2(u)}+ \frac{\displaystyle \sum_{\{u,v\} \in \partial S} w_{uv} \left(\phi(u) -\phi(v)\right)^2+ \sum_{\{u,v\} \notin \partial S} w_{uv}\abs{\phi^2(u)-\phi^2(v)}}{\sum_u q_u \phi^2(u) }
    \]
    \[
        \leq h+ \frac{\displaystyle \sum_{\{u,v\} \in E(G)} w_{uv} \left(\phi(u) -\phi(v)\right)^2+ \sum_{\{u,v\} \notin \partial S} w_{uv}\left(\abs{\phi^2(u)-\phi^2(v)}-\left(\phi(u)-\phi(v)\right)^2\right)}{\sum_u q_u \phi^2(u) }
    \]    
    \[
        = h+ \lambda_0(H)  + \frac{\displaystyle \sum_{\{u,v\} \notin \partial S} w_{uv}\bigg[2 \min\{\phi(u),\phi(v)\} \;\abs{\phi(u)-\phi(v)}\bigg]}{\sum_u q_u \phi^2(u) }
    \]    
    \[
        \leq h+ \lambda_0(H) + \frac{\displaystyle \sum_{\{u,v\}} w_{uv}\bigg[2 \min\{\phi(u),\phi(v)\} \;\abs{\phi(u)-\phi(v)}\bigg]}{\sum_u q_u \phi^2(u) }
    \]    
    \[
        \leq h+ \lambda_0(H)  + 2\frac{\displaystyle \sum_{u}\phi(u)\sum_{v\sim u} w_{uv}\abs{\phi(u)-\phi(v)}}{\sum_u q_u \phi^2(u) }
    \]    
    \[
        \leq h + \lambda_0(H) + \epsilon\frac{\displaystyle \sum_{u} d_u \phi^2(u)}{\sum_u q_u \phi^2(u) }
    \]    
    \[
       \leq h + \lambda_0(H)  + \epsilon Q .
    \]
    \end{dgroup*}
    
    Thus,
    \[
        g\leq h + \lambda_0(H) + \epsilon Q.
    \]
\end{proof}
The above theorem is not as tight as we would ideally like. In the future, it would be advantageous to derive a better analogue of the results in \cite{cheng1997isoperimetric}. Although continuous, those results suggest that one could derive a comparison theorem such that $c h \geq g$ for some constant $c$ that depends only upon the structure of the space. Additionally, it seems likely that in the case that $\phi$ is unimodal, the weighted Cheeger constant is proportional to the Cheeger constant of the host graph. Nonetheless, a proof remains elusive. 

\subsubsection{Subgraph Comparison}
We can prove something a bit better by comparing subgraphs of our Hamiltonian and applying \cref{lem:potential}. For any $S$, let $h_S$ be as in \cref{eqn:local_Cheeger}. That is,
\begin{equation}\label{eqn:hs}
    h_S = \frac{\abs{\partial S}}{\min\{\vol(S),\vol(\overline{S})\}}
\end{equation}
where all quantities are as in \cref{sec:cheeger}.

If we again restrict to the case that $H$ is stoquastic, then we can apply the technique of \cref{lem:potential} to prove a theorem which makes clear the significance of the Cheeger constant for any particular cut $S \subset G$. In the following theorem, we make use of the Dirichlet representation of \cref{sec:Dirichlet}. In other words,
    \[
        \lambda_0(H) = \inf_{\substack{f \\ f\vert_{\delta G}=0}} \frac{\sum_{\{u,v\} \in E(G)\cup \partial G}w_{uv}(f(u)-f(v))^2}{\sum_{u \in V(G)} q_u f^2(u)}.
    \]
Thus, for any subgraph $S \subseteq G$, we can consider $\delta G \subseteq \delta S$. Another way of stating this, is that 
\begin{dgroup*}
    \[
        \lambda_0^D(H,S) = \inf_{\substack{f \\ f\vert_{\delta S}=0}} \frac{\sum_{\{u,v\} \in E(S)\cup \partial S}w_{uv}(f(u)-f(v))^2}{\sum_{u \in V(S)} q_u f^2(u)}
    \]
    \[
        = \inf_{\substack{f \\ f\vert_{\delta S}=0}} \frac{\sum_{\{u,v\} \in E(S)\cup (\partial S \setminus \partial G)}w_{uv}(f(u)-f(v))^2+\sum_{u \in V(S)} q_uW_u \phi^2(u)}{\sum_{u \in V(S)} q_u f^2(u)}.
    \]
\end{dgroup*}
The following theorem compares the Dirichlet eigenvalues of the subgraph $S$ to those of $G$. 

\begin{thm}
Suppose that $H$ is a stoquastic Hamiltonian with ground state $\phi>0$, corresponding to a graph $G$ with subgraph $S \subset G$. Then,
    \[
        h_S \geq \lambda_0^D(H,S) - \lambda_0(H).
    \]
    Above, $h_S$ is as in \cref{eqn:hs} and $\lambda_0^D(H,S)$ is the Dirichlet eigenvalue of the subgraph $S$ of the host graph $G\subseteq G'$, defined by
    \[
        \lambda_0^D(H,S) = \inf_{\substack{f \\ f\vert_{\delta S}=0}} \frac{\sum_{\{u,v\} \in E(S)\cup \partial S}w_{uv}(f(u)-f(v))^2}{\sum_{u \in V(S)} q_u f^2(u)}.
    \]
\end{thm}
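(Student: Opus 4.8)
The plan is to mimic the proof of \cref{lem:potential}, using the ground state $\phi$ truncated to $S$ as a trial function for $\lambda_0^D(H,S)$. Since $H$ is stoquastic and its host graph connected, Perron--Frobenius gives $\phi>0$, so setting $\tilde\phi(u)=\phi(u)$ for $u\in V(S)$ and $\tilde\phi(u)=0$ otherwise defines a legitimate nonzero function with $\tilde\phi|_{\delta S}=0$. Plugging $\tilde\phi$ into the variational characterization of $\lambda_0^D(H,S)$ — in the form with $W$ made explicit, displayed just before the theorem — gives denominator $\vol(S)$ and numerator
\[
N=\sum_{\{u,v\}\in E(S)}w_{uv}\big(\phi(u)-\phi(v)\big)^2+\sum_{\substack{\{u,v\}\in\partial S\setminus\partial G\\ u\in V(S)}}w_{uv}\,\phi^2(u)+\sum_{u\in V(S)}q_uW_u\phi^2(u).
\]

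Next I would rewrite $N$ using the eigenvalue equation \cref{eqn:qweighted} at $\lambda=\lambda_0$. Exactly the symmetrization carried out inside the proof of \cref{lem:potential} gives
\[
\sum_{u\in V(S)}(\lambda_0-W_u)q_u\phi^2(u)=\sum_{\{u,v\}\in E(S)}w_{uv}\big(\phi(u)-\phi(v)\big)^2+\sum_{\substack{\{u,v\}\in\partial S\setminus\partial G\\ u\in V(S)}}w_{uv}\big(\phi(u)-\phi(v)\big)\phi(u).
\]
Subtracting this identity from $N$ makes the interior-edge term and the $W_u$ term cancel, leaving $N=\lambda_0(H)\vol(S)+\sum_{\{u,v\}\in\partial S\setminus\partial G,\,u\in V(S)}w_{uv}\phi(u)\phi(v)$, and the remaining boundary sum is precisely $\abs{\partial S}$ in the weighted sense of \cref{sec:cheeger} (the edges into $\delta G$ drop out since $\phi$ vanishes there).

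Combining the two steps, $\lambda_0^D(H,S)\le N/\vol(S)=\lambda_0(H)+\abs{\partial S}/\vol(S)$, and since $\vol(S)\ge\min\{\vol(S),\vol(\overline S)\}$ we have $\abs{\partial S}/\vol(S)\le h_S$, so $h_S\ge\lambda_0^D(H,S)-\lambda_0(H)$ as claimed. I do not foresee a genuine obstacle: this is a single trial-function estimate together with the algebraic identity already proved for \cref{lem:potential}. The steps requiring care are purely bookkeeping — keeping $\partial S$ taken inside the host graph so the boundary term matches the Cheeger convention of \cref{sec:cheeger}, checking that the potential terms $q_uW_u\phi^2(u)$ cancel against the shift in $\lambda$, and observing that one never needs to pick the ``small side'' of the cut, since replacing $\vol(S)$ by $\min\{\vol(S),\vol(\overline S)\}$ only weakens the denominator in the favorable direction.
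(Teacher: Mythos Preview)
Your proposal is correct and is essentially the same argument as the paper's: both use the ground state $\phi$ restricted to $S$ as a trial function for $\lambda_0^D(H,S)$ and the eigenvalue identity from \cref{lem:potential} to turn the Rayleigh numerator into $\lambda_0(H)\vol(S)+\abs{\partial S}$. The only cosmetic difference is ordering---the paper starts from the eigenvalue identity and bounds below by the Dirichlet Rayleigh quotient, while you start from the Dirichlet Rayleigh quotient and substitute the identity---and your observation that $\abs{\partial S}/\vol(S)\le h_S$ holds without the paper's WLOG assumption $\vol(S)\le\vol(\overline S)$ is a nice touch.
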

\begin{proof}
    First, we begin with the appropriate definition of the Dirichlet eigenvalues of a subgraph.

    We begin as in \cref{lem:potential}. Without loss of generality, assume that $\vol(S) \leq \vol(\overline{S})$. Now, 
    \begin{dgroup*}
    \[
        \sum_{u \in V(S)}(\lambda_0(H)-W_u)q_u\phi^2(u) = \sum_{u\in S}\sum_{\{v,u\}\in E(G)}w_{uv}(\phi(u)-\phi(v))\phi(u)
    \]
    \[
        \lambda_0(H) \sum_{u \in V(S)}q_u\phi^2(u) = \sum_{\{u,v\} \in E(S)}w_{uv}(\phi(u)-\phi(v))^2 + \sum_{u \in V(S)}q_u W_u \phi^2(u) + \sum_{\{v,u\} \in \partial S \setminus \partial G}w_{uv}\left(\phi(u)-\phi(v)\right)\phi(u)
    \]
    \[
        \geq \lambda_0^D(H,S) \sum_{u \in V(S)}q_u \phi^2(u) - \sum_{\{v,u\} \in \partial S} w_{uv} \phi(v)\phi(u)
    \]
    \[
        = \left(\lambda_0^{D}(H,S) - h_S\right)\sum_{u \in V(S)}q_u\phi^2(u).
    \]
    \end{dgroup*}
    Since we know that $\sum_{u \in V(S)}q_u \phi^2(u) > 0$, 
    \[
        h_S \geq \lambda_0^D(H,S) - \lambda_0(H).
    \]
\end{proof}
In other words, whenever $h_S$ is exponentially small, there exists a Dirichlet eigenfunction for some subgraph that approximates the ground-state eigenvalue of $H$. This is equivalent to saying that there exists some block of $H$ that has approximately the same ground-state eigenvalue as $H$ itself.

\section{Physical implications}\label{sec:discussion}
These results lead to a very concrete understanding of the nature of the spectral gap in most quantum systems. In a very strong sense, the presence of a spectral gap implies that the ground-state wave function \textit{must not} contain bottlenecks. Although this may be unsurprising, all prior results fail to confirm the intuition when $\norm{W}$ is sufficiently large. In this paper, I have eliminated the ability for physics behave unexpectedly in such situations. That is, we now know that gapped Hamiltonians must not contain strong bottlenecks in their ground-states and, additionally, the appropriate scaling of this claim. Equivalently, the presence of a bottleneck guarantees a small spectral gap.

This conceptual point does not yet hold in reverse. That is, we have not shown that a small gap implies a strong bottleneck. It is possible that there exist Hamiltonians with ground-states without bottlenecks that nonetheless have small spectral gaps. This particular point may be of some physical interest and worth exploring, however in the context that inspired this work is somewhat less interesting.

Probably the major advantage of this characterization is that we can now definitively say that, for the standard adiabatic theorem to guarantee an efficient adiabatic process, at no point in the evolution must $H$ have a bottlenecked ground-state. Some results suggest that, at least with existing Monte Carlo techniques, states without bottlenecks can still be hard to simulate \cite{Hastings,jarret2016adiabatic,bringewatt2018diffusion}. Nonetheless, a guaranteed lack of bottlenecks reaffirms my agnosticism about whether one might be able to classically and efficiently sample from ground-state distributions arising from large-gap stoquastic Hamiltonians. Shifting dialogue away from spectral gaps and towards bottlenecked distributions as also suggested in \cite{Jarret2014a,crosson2017quantum} will, hopefully, shed light on this question one way or the other.

\section{The Bashful Adiabatic Algorithm}
In this section, I show how one might be able to exploit the weighted Cheeger constant to improve quantum adiabatic algorithms. A quantum process solves the Schr\"{o}dinger equation
\[  \begin{cases}
        i \frac{\partial \phi(t)}{\partial t} = H(t/T) \phi(t) \\
        \phi(0) = \phi_0(0)
    \end{cases}
\]
where $\phi_0(t)$ is the ground-state of $H(t/T)$.  An adiabatic algorithm seeks to produce the distribution $\phi(T) \approx \phi_0(T)$ and the adiabatic theorem guarantees that this can be done provided that a quantity like $\gamma^{-2}(H(t/T))\norm{\frac{dH(t/T)}{dt}}$ is never too large \cite{Jansen2006}. Abusively, for this section, we call the Hamiltonian $H(t/T)$ the ``schedule''. At least in the case of real Hamiltonians, our inequality opens up the possibility of adaptive adiabatic algorithms, or those where we adjust the rate of variation of $H$ in response to the size of the gap.

In many cases, $h$ reduces the problem of bounding the spectral gap to determining information about the ground-state. This allows one to stop an evolution early, say at $t < T$ and bound the gap at that point. That is, suppose we know $\phi(t) \approx \phi_0(t)$ for some $t$. Then, if we can use $\phi(t)$ to approximate $h$, we can assume that we know $\gamma(H(t/T))$. One can use Weyl's inequality or another perturbative argument to then guarantee that $\gamma(H(\tau/T)) \geq c$ for some choice of $c$ and $\tau > t$. Thus, we can restart the adiabatic from $t=0$ and choose an appropriate $dH(t/T)/dt$ such that $\phi(\tau) \approx \phi_0(\tau)$. Repeating this until $\tau = T$ would give us the entire adiabatic path with, potentially, only polynomial overhead. This algorithm, which I am calling the Bashful Adiabatic Algorithm (BAA), is sketched below:\footnote{BAA reminds me of its sheepishness.}

\begin{algorithm}[H]
\caption*{\textbf{Bashful Adiabatic Algorithm}}
\begin{algorithmic}[1]\label{alg:find_eta}
    \State Assume $H_\tau(1) = H_0(1)$ for all choices of $\tau$.
    \State Choose a schedule $H_{\tau}$ with $\min_{t < \tau}\gamma(H_{\tau}(t/T)) > \gamma_\min$.
    \State Prepare the state $\phi_0(0)$ of $H_0(0)$. 
    \While{$\tau < T$}  
        \State Generate $N$ copies of $\phi(\tau)$ from $\phi(0)$ using the schedule $H_\tau(t/T)$.
        \State Sample $\{\phi(\tau)\}$ and (if possible) approximate the weighted Cheeger constant of $H_{\tau}(\tau)$.
        \State Use the result to bound $\min_{t <\tau + \delta \tau}\gamma(H_{\tau+\delta \tau}(t/T))$ for some new schedule $H_{\tau + \delta \tau}(t/T)$. 
        \State $\tau \gets \tau + \delta \tau$.
    \EndWhile
    \Return $\phi(T)$ using the schedule $H_T(t/T)$.
\end{algorithmic}
\end{algorithm}
This algorithm would run in time $\bigO{(T/\delta \tau)^2(X+N\delta \tau)}$, where $\delta \tau$ is the smallest timestep taken, $N$ is the number of copies needed, and $X$ the longest time it takes to compute $h$. The reader should note that even if $\delta \tau$ must get very small (because $\gamma$ gets very small), so long as it is only small for a sufficiently short period of time, we should be able to locally decrease $\norm{\frac{dH}{dt}}$ and obtain much tighter scaling than that proposed above. Furthermore, we can ensure that our $\norm{\frac{d H}{dt}}$ is taken as large as possible while remaining consistent with the adiabatic theorem, or that our path (through time) is chosen optimally. The ability to compute $h$ may allow one to predict when an adiabatic path needs to be changed, as suggested in \cite{crosson2014different}. 

Even given the ability to sample $\phi_0$, we would still require an efficient method for approximating $h$. Although I do not expect this to be possible for an arbitrary graph and $\phi_0$, this may indeed be possible for some classes of graphs and reasonable assumptions about $\phi_0$. It is likely that a statement like \cref{lem:potential} will be useful in this regard. Additionally, while there will clearly be distributions where an approximation strategy for $h$ should fail, it is quite possible that these same instances correspond to otherwise intractable optimization problems. 

As an example, one can think of the graph $G=(V,E)$ with $V = \{u_i \; \vert \; i \in \intrange{1}{n} \}$ and $E = \{\{u_i,u_{i+1} \} \; \vert \; i \in \intrange{1}{n-1}\}$. Suppose that for some $j \notin \{i,i+1\}$, the Hamiltonian has ground-state
\[
    \phi_0(u_i,\tau) = \begin{cases}
        c_1 & i = 1 \\
        c_j & i = j \\
        C & i \notin \{1,j\} .
    \end{cases}
\]
Choosing $C \sim e^{-n}$, if $c_1 > c_j \sim \mathrm{poly}(n)$, then there exists a cut such that $h$ is exponentially small in $n$. Using $L$ as the graph Laplacian for this graph, this is achieved by, for example, the ground-state of $H=L+W$ with diagonal matrix $W \equiv \diag{(W_u)_{u \in V}}$ 
\[
    W_{u_i} = \begin{cases}
            c x^{-1} & i=1\\    
            x c^{-1} & i = 2\\ 
            c^{-1} & i = \abs{V}-1\\
            c & i = \abs{V}\\
            1 & \text{otherwise}
    \end{cases}
\]
and an appropriate choice of $c$ and $x$. (Take $c$ to be small and choose $x$ to produce the desired ratio of $c_1/c_{\abs{V}}$.)

Distinguishing this from the case where $c_j \sim e^{-n}$, which implies that $h$ is only polynomially small in $n$ (see \cite{Jarret2014a}), seems to be close to efficiently solving unstructured search. Thus, if one were to investigate an algorithm for approximating $h$, one might need to consider a divide-and-conquer approach that considers separate  adiabatic processes constrained to different subgraphs for sufficiently concentrated $\phi_0$. Another possibility would be to attempt to adapt existing algorithms for approximating the Cheeger constant in large networks \cite{spielman2004nearly}. Exploring this question is well beyond the scope of the present work, but would nonetheless be very interesting.

\section{Open questions and future work}
These inequalities lead to quite a few open questions. 
\begin{itemize}
\item First and foremost, I think, is the question of whether one can ever efficiently approximate the weighted Cheeger constant and what information/constraints would be necessary to do so. The standard combinatorial Cheeger constant has been the object of extensive study and we know determining it to be NP-hard \cite{matula1990sparsest}. Nonetheless, one can efficiently approximate the Cheeger constant, however the scaling of such estimates is probably insufficient for quantum systems. Additionally, given that the weighted Cheeger constant depends on more information than the combinatorial Cheeger constant, estimating the weighted Cheeger constant might be considerably harder. Nonetheless, it is possible that in sparse graphs, such as those that would naturally arise from physical systems of interest, this quantity might not be too difficult to approximate, especially if one is willing to take a poor estimate. If one can approximate $h$ efficiently enough in a large enough number of cases, one might potentially use this information to choose an adiabatic path for adiabatic quantum computation as discussed in the previous section \cite{crosson2014different}.

\item Also, because this work demonstrates the deficiencies in gap analysis, it would be interesting if one could prove a version of the adiabatic theorem specific to bottlenecked states. In particular, an adiabatic theorem that stresses Dirichlet eigenfunctions would probably be able to capture the ``relevant'' portion of the wavefunction. One can imagine a situation where the solution to some optimization problem is in a subgraph $S\subseteq G$ where there exists no bottleneck and $\phi$ is large and, yet, $\overline{S}$ contains a strong bottleneck somewhere. It would be interesting to see if such situations arise frequently, infrequently, or never. I suspect they arise frequently, and thus deriving adiabatic theorems that restrict to the subgraph $S$ that we wish to explore would have a hope of providing much better runtime bounds.

\item Another question is whether one can derive useful comparison theorems between the gap of the host graph and the gap of Hamiltonian, as  alluded to in \cref{sec:applications}. Desirable forms for comparison theorems can be found in many places, such as \cite{Chung2000,Chung}. (The interested reader should beware, however, as \cite[Theorem 3]{Chung2000} is incorrect due to a sign error and the result is carried through to two of the main corollaries of the paper. Theorem 4 of that paper also appears to be incorrect, and the best one can hope for is a statement like the present \cref{thm:comparison}.) It seems likely that, at least for unimodal ground-states on strongly convex subgraphs of homogeneous graphs (see \cite{Chung}), one should be able to show that the gap of the Hamiltonian scales with the gap of the graph. Additionally, \cite{Jarret2014a} shows that a condition like log-concavity is not enough to guarantee unimodality. In that paper, a seemingly bimodal distribution can satisfy log-concavity due to the nature of the boundary, whereas the continuous definition of log-concavity would imply unimodality.

\item Finally, one might consider what useful information the frustration index can provide about the spectral gap. In \cite{Martin2017}, the author derives isoperimetric inequalities that utilize the frustration index. It is entirely possible that a suitably defined index can yield tighter bounds than those derived through our reductions here. It also seems likely that this concept might be a key component to obtaining gap lower bounds in the general Hermitian case.

\end{itemize}
\section{Acknowledgements}
The idea for using $h$ to adjust the adiabatic path was arrived at during exchanges with Antonio Martinez. Kianna Wan pointed out many small errors that would have otherwise went unnoticed, helping me greatly improve my presentation. I thank Elizabeth Crosson, Stephen Jordan, Tsz Chiu Kwok, Brad Lackey, Lap Chi Lau, and Adrian Lupascu for helpful discussions. \PIRA

\appendix
\section{Proof of \cref{thm:nonstoq}}
\label{ap:proof}

First, we note that in the proof of \cref{thm:cheeger}, we had the following corollary. 
\begin{cor}\label{cor:nonstoq}
    For a graph $G=(V,E)$, suppose
    \[
        \gamma = \inf_{\tiny{g \perp q\phi^2}} \frac{\sum_{\{u,v\} \in E(G^+)}\widetilde\omega_{uv}[g(u)-g(v)]^2}{\sum_u q_u g^2(u)\phi^2(u)}.
    \]
    Then, for $f$ achieving the infimum above and 
    \[
        g(u) = \begin{cases}
        f(u) & f(u)\geq 0 \\
        0 & \text{otherwise},
        \end{cases}
    \]
    we have
    \[
        {\gamma \geq \frac{\displaystyle\sum_{\{v,u\}}\omega_{uv}[g(u)-g(v)]^2}{\displaystyle\sum_{u}q_u g^2(u)\phi_0^2(u)} \geq \frac{\left(\displaystyle\sum_{\{v,u\}} \omega_{uv}\abs{g^2(u)-g^2(v)}\right)^2}{\left(\displaystyle\sum_{u\in V(S)}q_uf^2(u)\phi_0^2(u)\right) \left(\displaystyle\sum_{\{v,u\}}\omega_{uv}[g(u)+g(v)]^2\right)} }.
    \]
\end{cor}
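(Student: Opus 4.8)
The statement is really a packaging of a block already present inside the proof of \cref{thm:cheeger}, so the plan is to reproduce that block in isolation. The one conceptual point to respect is that $g$ — the positive part of the minimiser $f$ — is \emph{not} orthogonal to $q\phi^2$, so one may not simply insert $g$ into the infimum defining $\gamma$ and read off an upper bound for the Rayleigh quotient; one must instead use that $f$ is a genuine minimiser and hence solves an Euler--Lagrange equation. Accordingly, the plan has three stages: (i) derive the variational identity for $f$; (ii) test it against $f$ restricted to its nodal set, which gives the first inequality $\gamma \ge \Phi$; (iii) apply Cauchy--Schwarz to $\Phi$ to obtain the second inequality.

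For (i) I would fix a vertex $u_0$ and perturb $f$ along the family that raises $f(u_0)$ by $\epsilon\vol(G\setminus\{u_0\})$ and lowers $f(u)$ by $\epsilon q_{u_0}\phi_0^2(u_0)$ at every other vertex; a short computation (using $\vol(G\setminus\{u_0\}) = \sum_{u\neq u_0} q_u\phi_0^2(u)$) shows this family remains orthogonal to $q\phi^2$, so $\tfrac{d}{d\epsilon}R(f_\epsilon)\big|_{\epsilon=0}=0$, which after simplification yields $\gamma\, q_u f(u)\phi_0^2(u) = \sum_{v\sim u}\widetilde\omega_{uv}[f(u)-f(v)]$ for every $u$. \emph{This is the main obstacle}: it is the only step where minimality (rather than mere admissibility) of $f$ is used, and the derivative bookkeeping is the delicate part — the footnote attached to \cref{thm:cheeger} testifies to this. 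Given the identity, I would multiply by $f(u)$, sum over the nodal set $V(S)=\{u:f(u)\ge 0\}$, and split the double sum into edges interior to $S$ (which combine into $\widetilde\omega_{uv}[f(u)-f(v)]^2$) and edges of $\partial S$ with $u\in V(S)$; on the latter $f(u)f(v)\le 0$, so $[f(u)-f(v)]f(u)\ge f^2(u)$ and, since $\widetilde\omega\ge 0$, one gets $\gamma\sum_{u\in V(S)}q_u f^2(u)\phi_0^2(u) \ge \sum_{\{u,v\}\in E(S)}\widetilde\omega_{uv}[f(u)-f(v)]^2 + \sum_{\{u,v\}\in\partial S,\,u\in V(S)}\widetilde\omega_{uv}f^2(u)$. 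Because $g=f$ on $V(S)$ and $g=0$ off it, the right side is exactly $\sum_{\{u,v\}}\widetilde\omega_{uv}[g(u)-g(v)]^2$ and $\sum_{u\in V(S)}q_u f^2(u)\phi_0^2(u)=\sum_u q_u g^2(u)\phi_0^2(u)$, giving the first inequality $\gamma\ge\Phi$ with $\Phi := \big(\sum_{\{u,v\}}\widetilde\omega_{uv}[g(u)-g(v)]^2\big)\big/\big(\sum_u q_u g^2(u)\phi_0^2(u)\big)$.

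For (iii) I would multiply the numerator and denominator of $\Phi$ by $\sum_{\{u,v\}}\widetilde\omega_{uv}[g(u)+g(v)]^2$ and invoke Cauchy--Schwarz with the nonnegative weights $\widetilde\omega_{uv}$, namely $\big(\sum\widetilde\omega_{uv}[g(u)-g(v)]^2\big)\big(\sum\widetilde\omega_{uv}[g(u)+g(v)]^2\big)\ge\big(\sum\widetilde\omega_{uv}\,|g^2(u)-g^2(v)|\big)^2$, to obtain $\Phi\ge\big(\sum\widetilde\omega_{uv}|g^2(u)-g^2(v)|\big)^2\big/\big[(\sum_u q_u g^2(u)\phi_0^2(u))(\sum\widetilde\omega_{uv}[g(u)+g(v)]^2)\big]$; rewriting the first factor in the denominator once more as $\sum_{u\in V(S)}q_u f^2(u)\phi_0^2(u)$ gives precisely the claimed bound. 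Beyond the Euler--Lagrange step everything here is the sign of $f$ across $\partial S$, nonnegativity of $\widetilde\omega$, and a single Cauchy--Schwarz — routine once stage (i) is in hand.
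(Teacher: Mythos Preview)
Your proposal is correct and follows essentially the same route as the paper: the corollary is not given an independent proof there but is explicitly described as a block extracted from the proof of \cref{thm:cheeger}, namely the Euler--Lagrange identity for the minimiser $f$ obtained via the one-parameter perturbation, the summation over the nodal set $S$ with the sign argument on $\partial S$, and then the single Cauchy--Schwarz step. Your identification of the Euler--Lagrange derivation as the only nontrivial point (and the reason one cannot simply plug $g$ into the infimum) matches the paper's emphasis exactly.
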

Now, we can prove \cref{thm:nonstoq} by
adapting the proof of \cref{thm:cheeger}. First, we note that by \cref{cor:nonstoq},
\begin{dgroup*}
\[
        {\gamma \geq  \Phi} = \frac{\displaystyle\sum_{\{v,u\}}\omega_{uv}[g(u)-g(v)]^2}{\displaystyle\sum_{u}q_u g^2(u)\phi_0^2(u)} \geq \frac{\left(\displaystyle\sum_{\{v,u\}} \omega_{uv}\abs{g^2(u)-g^2(v)}\right)^2}{\left(\displaystyle\sum_{u\in V(S)}q_uf^2(u)\phi_0^2(u)\right) \left(\displaystyle\sum_{\{v,u\}}\omega_{uv}[g(u)+g(v)]^2\right)}
     \]\[ =\frac{\left(\displaystyle\sum_{\{v,u\}} \omega_{uv}\abs{g^2(u)-g^2(v)}\right)^2}{\left(\displaystyle\sum_{u\in V(S)}q_uf^2(u)\phi_0^2(u)\right) \left(\displaystyle 2\sum_{\{v,u\}}\omega_{uv}[g^2(u)+g^2(v)] -\sum_{\{v,u\}}\omega_{uv}[g(u)-g(v)]^2\right)}
     \]\[
        = \frac{\left(\displaystyle\sum_{\{v,u\}} \omega_{uv}\abs{g^2(u)-g^2(v)}\right)^2}{\left(\displaystyle\sum_{u\in V(S)}q_uf^2(u)\phi_0^2(u)\right) \left(\displaystyle 2\sum_{u}g^2(u) \sum_{v\sim u}\omega_{uv} -\sum_{\{v,u\}}\omega_{uv}[g(u)-g(v)]^2\right)}
      \]\[ \geq\frac{\left(\displaystyle\sum_{\{v,u\}} \omega_{uv}\abs{g^2(u)-g^2(v)}\right)^2}{\left(\displaystyle\sum_{u\in V(S)}q_uf^2(u)\phi_0^2(u)\right) \left(\displaystyle 2\sum_{u}g^2(u) \sum_{v\sim u}\phi(u)\phi(v)w_{uv} -\sum_{\{v,u\}}\omega_{uv}[g(u)-g(v)]^2\right)}
       \]\[ \geq\frac{\left(\displaystyle\sum_{\{v,u\}} \omega_{uv}\abs{g^2(u)-g^2(v)}\right)^2}{\left(\displaystyle\sum_{u\in V(S)}q_uf^2(u)\phi_0^2(u)\right) \left(\displaystyle 2\sum_{u}q_u f^2(u)\phi^2(u)\left(W_u + \frac{d_u}{q_u}- \lambda_0\right) -\sum_{\{v,u\}}\omega_{uv}[g(u)-g(v)]^2\right)}
        \geq\frac{\left(\displaystyle\sum_{\{v,u\}} \omega_{uv}\abs{g^2(u)-g^2(v)}\right)^2}{\left(\displaystyle\sum_{u\in V(S)}q_uf^2(u)\phi_0^2(u)\right)^2 \left(\displaystyle 2Q + 2 \left(\lambda_{\abs{V}-1}-\lambda_0\right) - \Phi \right)}
       \geq\frac{\left(\displaystyle\sum_{\{v,u\}} \omega_{uv}\abs{g^2(u)-g^2(v)}\right)^2}{\left(\displaystyle\sum_{u\in V(S)}q_uf^2(u)\phi_0^2(u)\right)^2 \left(\displaystyle 2Q + 2\rho - \Phi \right)}.
    \]
\end{dgroup*}
    The remainder of this proof follows identically the remaining portion of the proof of \cref{thm:cheeger}.

\bibliographystyle{plain}

\end{document}